\documentclass[11pt]{amsart}

\usepackage[T1]{fontenc}
\usepackage[utf8]{inputenc}
\usepackage[margin=1in]{geometry}
\usepackage{amsmath,amssymb,amsthm}
\usepackage{mathrsfs}
\usepackage{enumitem}
\usepackage{tikz}
\usetikzlibrary{shadings,patterns,arrows.meta,calc,decorations.pathreplacing}
\usepackage{hyperref}

\theoremstyle{plain}
\newtheorem{theorem}{Theorem}
\newtheorem{proposition}{Proposition}
\newtheorem{lemma}{Lemma}
\theoremstyle{definition}
\newtheorem{definition}{Definition}
\newtheorem{example}{Example}
\theoremstyle{remark}
\newtheorem{remark}{Remark}
\DeclareMathOperator{\Supp}{Supp}
\newcommand{\C}{\mathbb{C}}
\newcommand{\R}{\mathbb{R}}
\newcommand{\N}{\mathbb{N}}
\newcommand{\Oh}{\mathcal{O}}

\newcommand{\rlct}{\operatorname{lct}}
\newcommand{\lct}{\operatorname{lct}}
\newcommand{\ord}{\operatorname{ord}}
\newcommand{\Vol}{\operatorname{Vol}}

\newcommand{\Exc}{\mathrm{Exc}}
\newcommand{\Cont}{\mathrm{Cont}}

\DeclareMathOperator{\Jac}{Jac}

\DeclareMathOperator{\codim}{codim}
\DeclareMathOperator{\Spec}{Spec}
\DeclareMathOperator{\Hom}{Hom}

\begin{document}

\title[Divisorial Geometry of Volume Asymptotics]{On the Divisorial Geometry of Volume Asymptotics of Sublevel Sets}
\thanks{The author acknowledges the support Funda\c{c}\~{a}o de Amparo \`{a} Pesquisa do Estado de S\~{a}o Paulo - Brazil (FAPESP), grant no.~2019/21181--0, and the Coordena\c{c}\~ao de Aperfei\c{c}oamento de Pessoal de N\'ivel Superior -- Brazil (CAPES), MATH-AmSud program, Grant No.\ 88881.179491/2025-01.}

\author[N. Grulha]{Nivaldo Grulha}
\address{Instituto de Ci\^encias Matem\'aticas e de Computa\c{c}\~ao,
Universidade de S\~ao Paulo, S\~ao Carlos, SP, Brazil}

\subjclass[2020]{14E15, 14B05, 32S05, 32S40, 14G65, 11S40}
\keywords{birational invariants, divisorial valuations,
log canonical threshold, resolution of singularities,
arc spaces, singularities}

\date{}

\begin{abstract}
The log canonical threshold (LCT) is a fundamental invariant in
birational geometry and singularity theory, measuring the complexity of
an analytic singularity through discrepancy and valuation data on a log
resolution.

In this work we investigate the asymptotic behaviour of sublevel-set
volumes associated with principal analytic ideals, equivalently with
holomorphic function germs. Building on the classical
theory of local zeta functions and Mellin asymptotics, we introduce the \emph{visible
spectrum} — the set of actual poles of the local zeta function — and
show that it is determined by the asymptotic expansion of the volume.
Conversely, we
prove that this spectrum, together with its multiplicities and
coefficients, can be recovered recursively from the volume asymptotics by
an explicit reconstruction procedure.

We also give complementary interpretations in terms of arc spaces,
where the divisorial exponents appear both as ratios of vanishing orders
along generic divisorial arcs and as normalized codimension growth rates
of divisorial cylinders.

Taken together, these results establish an explicit correspondence
between the visible spectrum and the asymptotic expansion of
sublevel-set volumes, providing an intrinsic metric characterization of
the visible spectrum itself.
\end{abstract}

\maketitle

\section{Introduction}
\label{sec:intro}

The log canonical threshold (LCT) is a fundamental invariant in the
study of singularities, measuring the complexity of a germ through a
single numerical value. Its real-analytic analogue, the real log
canonical threshold (RLCT) of Watanabe, is defined through resolutions
over $\R$ rather than over $\C$ and plays an equally important role in
singular learning theory. As shown by Saito~\cite[Corollaries 1
and~2]{Saito21}, the two invariants genuinely need not agree in general,
since a real log resolution need not induce a complex log resolution of
the complexified singularity (and vice versa). Throughout the present
paper we work with complex log resolutions in the sense of Hironaka, so
the invariant studied here is the (complex) LCT, not Watanabe's RLCT; see
the remark following Definition~\ref{def:spectrum} for the precise
normalization used.

The present paper sits at the interface of this birational-geometric
circle of ideas and asymptotic analysis: the divisorial data of a log
resolution is what \emph{produces} the asymptotic expansion of the
sublevel-set volume, and our main results concern what can, and cannot,
be recovered from that expansion alone, without reference to a
resolution. The geometric input is thus a means to an analytic end
rather than the object ultimately reconstructed.

The RLCT plays an analogous role in singular learning theory, governing
the asymptotic behaviour of statistical models with singular parameter
spaces~\cite{Watanabe09,Wei23}; a related metric viewpoint relates
Łojasiewicz-type invariants to log canonical thresholds and their
bi-Lipschitz stability~\cite{BiviAFukui18}. These connections are
mentioned only for context: the present work concerns exclusively the
complex LCT and the asymptotics arising from complex log resolutions.

Motivated by these developments, we study analytic singularities through
the asymptotic behaviour of sublevel-set volumes. Given an analytic
function $f$, we consider
\[
V(\varepsilon)
=
\Vol\{x:|f(x)|<\varepsilon\},
\]
which measures the rate at which the singularity collapses at small
scales.

It is classical that the leading decay of $V(\varepsilon)$ is governed
by the LCT. More generally, the theory of local zeta functions shows
that $V(\varepsilon)$ admits a complete asymptotic expansion whose
exponents are precisely the poles of the associated local zeta
function. The natural inverse problem is then the following:

\begin{quote}
\emph{To what extent can the divisorial information encoded in the poles
of the local zeta function be recovered directly from the asymptotic
behaviour of the volume function?}
\end{quote}

More generally, let $(X,0)$ be the germ of a complex analytic space of
pure dimension $n$, and let
\[
I=(f)\subset\mathcal O_{X,0}
\]
be a principal ideal, generated by a single holomorphic germ
$f:(X,0)\to(\C,0)$. Throughout this paper we work exclusively with
principal analytic ideals; the extension to arbitrary analytic ideals
will be addressed elsewhere.

The main purpose of this paper is to study the relationship between the
asymptotic expansion of the sublevel-set volume and the \emph{visible
spectrum} — the set of poles of the local zeta function that occur with
nonzero Laurent coefficient in the expansion of $V(\varepsilon)$, a
locally finite birational invariant contained in the extended candidate
spectrum of any log resolution.

More precisely, we prove that:

\begin{itemize}

\item the asymptotic expansion of $V(\varepsilon)$ is governed by
the visible spectrum;

\item conversely, the visible spectrum, together with its
multiplicities and coefficients, is reconstructed recursively from
that expansion by an explicit intrinsic procedure.

\end{itemize}

\medskip
\noindent\textbf{Known results and new contributions.}
The existence of asymptotic expansions of sublevel-set volumes and their
relation to the poles of local zeta functions is classical, originating
in the work of Atiyah~\cite{Atiyah70},
Varchenko~\cite{Varchenko76}, and
Igusa~\cite{Igusa00},
and later refined by
Denef--Loeser~\cite{DenefLoeser92,DenefLoeser99}.
Likewise, the interpretation of divisorial ratios in terms of arc spaces
follows from the work of
Ein--Lazarsfeld--Mustaţă~\cite{EinLazMust04}
and
Mustaţă~\cite{Mustata01},
and is included here to place the reconstruction theorem in its natural
birational context.

The principal contribution of this article is the converse direction
described above. We also clarify the distinction between the
extended candidate spectrum
$\widetilde{\Lambda}_\pi(I)$ and the visible spectrum
$\Lambda(I)$ of actual poles of the local zeta function, making
explicit the hypotheses under which the reconstruction is valid.

\section{Preliminaries}
\label{sec:prelim}

Throughout the paper we work in the analytic category. The basic object of
study is an analytic pair
\[
(X,I),
\]
where $X$ is a complex analytic space of pure dimension $n$ and
$I=(f)\subset\mathcal O_X$ is a principal analytic ideal, generated by a
single holomorphic germ $f$.

Our interest lies in the asymptotic geometry determined by the ideal.
Rather than viewing $I$ merely as defining a singular subset, we regard
it as encoding a hierarchy of vanishing scales whose birational
resolution governs both the local zeta function and the asymptotic
behaviour of sublevel-set volumes.

Throughout the remainder of the paper we work locally in a smooth ambient
space,
\[
(X,0)=(\C^n,0),
\]
and all constructions are understood in a sufficiently small
neighbourhood of the origin. This is the natural setting for the study
of local zeta functions and volume asymptotics. Although the ambient
space is complex, all metric and volumetric quantities are taken with
respect to the underlying real structure
\[
\C^n\cong\R^{2n}
\]
and the associated Lebesgue measure.

The general analytic pair $(X,I)$ is introduced only to motivate the
setting. From this point onward the singularity under consideration is
that of the ideal $I=(f)$, not of the ambient space itself. A different
asymptotic theory based on the same divisorial framework, but formulated
over arbitrary reduced analytic spaces, is developed independently
in~\cite{DAH}.

Let
\[
I=(f)\subset\mathcal O_{X,0}
\]
be a principal ideal, generated by a holomorphic germ $f$ that is not a
unit. By coherence, all invariants considered below
depend only on the germ.

The associated analytic set is
\[
V(I)
=
\{x\in X:\ f(x)=0\}.
\]

Fix once and for all a sufficiently small Euclidean ball
\[
B_\delta\subset\C^n.
\]
Unless explicitly stated otherwise, every asymptotic statement is taken
as $\varepsilon\to0^+$ with $\delta$ fixed.

\subsection*{Resolution and birational data}

Resolution of singularities provides the fundamental bridge between the
analytic and birational aspects of the theory. Rather than resolving only
the singularities of the ambient space, one resolves the vanishing
behaviour of the ideal itself.

A \emph{log resolution} of the pair $(X,I)$ is a proper birational
morphism
\[
\pi:Y\longrightarrow X
\]
such that

\begin{itemize}
\item $Y$ is smooth, and
\item the divisor
\[
\Exc(\pi)\cup\Supp(\pi^{-1}I)
\]
has simple normal crossings.
\end{itemize}

Such resolutions exist in characteristic zero by Hironaka's theorem
\cite{Hir64}; see also the functorial analytic desingularization of
Bierstone--Milman~\cite{BM97}.

Writing the total transform of the ideal and the relative canonical
divisor as
\[
\pi^*I
=
\Oh_Y\!\left(-\sum_i m_iE_i\right),
\qquad
K_{Y/X}
=
\sum_i k_iE_i,
\]
the integers
\[
m_i=\nu_{E_i}(I),
\qquad
k_i=a(E_i),
\]
are respectively the multiplicity of $I$ and the discrepancy along the
prime divisor $E_i$. Here $E_i$ ranges over \emph{all} prime components of
the simple normal crossings configuration
$D_\pi:=\Exc(\pi)\cup\Supp(\pi^{-1}I)$, not only over the exceptional ones:
whenever $\pi$ requires no exceptional divisor over a given component (for
instance when $I$ is already a normal crossings divisor, so that
$\pi=\mathrm{id}$ is itself a log resolution), the corresponding
strict-transform components of $\pi^{-1}I$ still carry well-defined data
$(m_i,k_i)$, with $k_i=0$ when $E_i$ is non-exceptional. Throughout this
paper we use the classical notation $(m_E,k_E)$. Readers familiar with the
homological theory
developed in~\cite{DAH} should note that the equivalent notation
$(\nu_E,a_E)$ is adopted there; the two pairs of symbols denote exactly
the same numerical invariants.

Recall that $I=(f)$ is principal, generated by the single germ $f$. In
local coordinates adapted to
the exceptional divisor one has
\[
f\circ\pi
=
u(y)\prod_i y_i^{m_i},
\qquad
|\Jac(\pi)|
=
v(y)\prod_i|y_i|^{k_i},
\]
where $u$ and $v$ are analytic units.

The associated divisorial ratios are
\[
\lambda_E
=
\frac{k_E+1}{m_E},
\]
which simultaneously govern the integrability of $|f|^{-2s}$, the poles
of the local zeta function, the asymptotic expansion of sublevel-set
volumes, and the arc-space interpretation recalled later in
Section~\ref{sec:arc}.

\subsection*{LCT and the resolution-dependent spectrum}

The \emph{log canonical threshold} of the ideal $I$ is
\[
\rlct(I)
=
\min_E\frac{k_E+1}{m_E}.
\]

Equivalently, it is the largest exponent $\lambda$ for which
$|f|^{-2\lambda}$ is locally integrable near the origin, where $f$
generates $I$.

\begin{definition}[Resolution-dependent spectrum]
\label{def:spectrum}

For a fixed log resolution $\pi:Y\to X$, the
\emph{resolution-dependent spectrum} is the finite set
\[
\Lambda_\pi(I)
=
\left\{
\frac{k_E+1}{m_E}
:
E\subset D_\pi
\right\},
\qquad
D_\pi:=\Exc(\pi)\cup\Supp(\pi^{-1}I),
\]
where $E$ ranges over \emph{all} prime divisors of the simple normal
crossings configuration $D_\pi$, including (when present) the
non-exceptional components coming from the strict transform of $V(I)$; see
Examples~\ref{ex:fermat} and~\ref{ex:log}, where such non-exceptional
components contribute the value $\lambda=1$ to $\Lambda_\pi(I)$.

Writing
\[
\Lambda_\pi(I)
=
\{\lambda_1<\cdots<\lambda_N\},
\]
one has
\[
\lambda_1
=
\rlct(I).
\]

\end{definition}

The spectrum $\Lambda_\pi(I)$ depends on the chosen resolution.
Additional blow-ups may introduce new exceptional divisors carrying
strictly larger ratios, while the minimum remains unchanged.
Consequently, the individual ratios attached to a fixed divisor are
birational invariants of the corresponding valuation, whereas the finite
set $\Lambda_\pi(I)$ itself is generally not.

We now introduce the central object studied in this paper: the
\emph{visible spectrum}, a notion proposed in the present work, defined
analytically through the local zeta function and, equivalently, through
the asymptotic expansion of sublevel-set volumes.

\begin{definition}[Visible spectrum]
\label{def:visible-spectrum}

Let $I=(f)$ be principal, and let

\[
Z_I(s)
=
\int_{B_\delta}|f(x)|^{-2s}\,d\mathrm{Vol}(x)
\]

be its local zeta function (cf.\ the \emph{Volume asymptotics and local
zeta function} subsection below, where the same integral is related to
$V(\varepsilon)$). The \emph{visible spectrum}

\[
\Lambda(I)
\]

is the set of actual poles of $Z_I(s)$.

Equivalently, it is the set of exponents appearing with nonzero
coefficient in the asymptotic expansion of the sublevel-set volume
$V(\varepsilon)$.
The equivalence follows from the classical Mellin--Tauberian
correspondence between local zeta functions and volume asymptotics.

\end{definition}

By construction, $\Lambda(I)$ is defined directly as the pole set of the
integral $Z_I(s)$, with no reference to any particular resolution:
unlike $\Lambda_\pi(I)$, which is extracted from a chosen log
resolution, $\Lambda(I)$ admits an intrinsic characterization directly
from the asymptotic behaviour of the volume function. Its birational
invariance and the identity
\[
\min\Lambda(I)
=
\rlct(I)
\]
are established in Proposition~\ref{prop:resolution-independence}(iv)
below (see also \cite{Igusa00,DenefLoeser92,DenefLoeser99} for the
classical theory underlying this invariance). In general $\Lambda(I)$ is
only locally finite, since it is contained in the extended candidate
spectrum introduced in Definition~\ref{def:spectrum-extended}.

The valuative nature of the divisorial ratios is reflected in the
arc-space description of log canonical thresholds. By the work of
Mustaţă~\cite{Mustata01,Mustata06}, the quantities

\[
\frac{k_E+1}{m_E}
\]

appear as asymptotic codimension growth rates of divisorial cylinders in
jet schemes.

\medskip
Throughout this paper the log canonical threshold is defined, as is
standard in complex algebraic and analytic geometry (cf.\ \cite{Kollar13,
Lazarsfeld04}), through the integrability of
\[
|f|^{-2\lambda}
\]
with respect to a complex log resolution. If one instead adopts the
convention based on
\[
|f|^{-\lambda},
\]
which is common in singular learning theory following Watanabe's real
log canonical threshold (RLCT), the corresponding threshold is
$\frac12\rlct(f)$; this normalization discrepancy, together with the
distinct real-versus-complex resolution used to define it, is the reason
we avoid the notation RLCT for the invariant studied in this paper. The
same $\tfrac12$ normalization is used in the homological
theory developed in~\cite{DAH}, where the divisorial exponents are written

\[
\gamma_E
=
\frac12\lambda_E.
\]

Accordingly,

\[
\min\Gamma_H
=
\frac12\rlct(I),
\]

although this numerical correspondence plays no role in the present
paper.

\subsection*{Volume asymptotics and local zeta function}

Recall that $f$ generates the principal ideal
\[
I=(f).
\]
For $\varepsilon>0$ define the sublevel-set volume

\[
V(\varepsilon)
=
\Vol
\bigl\{
x\in B_\delta:\ |f(x)|<\varepsilon
\bigr\},
\]

where the volume is computed with respect to the
$2n$-dimensional Lebesgue measure
$d\mathrm{Vol}(x)$ on the underlying real manifold

\[
\C^n\simeq\R^{2n}.
\]

We write $d\mathrm{Vol}(x)$ (rather than $dV(x)$) to distinguish the
ambient measure from the one-variable volume function
$V(\varepsilon)$.

The associated local zeta function is

\[
Z_I(s)
=
\int_{B_\delta}
|f(x)|^{-2s}\,
d\mathrm{Vol}(x),
\]

which converges for

\[
0<\Re(s)<\lct(I).
\]

To relate the local zeta function to the push-forward measure associated
with the sublevel sets, consider the map

\[
\Phi:B_\delta\longrightarrow[0,\infty),
\qquad
\Phi(x)=|f(x)|.
\]

The push-forward of the $2n$-dimensional Lebesgue measure under $\Phi$
defines a positive Borel measure

\[
\mu=\Phi_*(d\mathrm{Vol})
\]

on $[0,\infty)$.

Its distribution function is precisely the sublevel-set volume,

\[
V(t)
=
\mu([0,t)).
\]

Equivalently, we denote by

\[
dV:=d\mu
\]

the associated Stieltjes measure.

The change-of-variables formula for push-forward measures gives

\[
Z_I(s)
=
\int_0^\infty
t^{-2s}\,
dV(t),
\]

which is the Mellin--Stieltjes transform of the measure $dV$.

Since $f$ is continuous on the compact ball $B_\delta$, there exists

\[
M
=
\sup_{x\in B_\delta}|f(x)|
<\infty.
\]

Hence

\[
V(t)
=
\Vol(B_\delta),
\qquad
t\ge M,
\]

so that $V$ is eventually constant and the Stieltjes measure $dV$
is supported in $[0,M]$.

Therefore,

\[
\Bigl[t^{-2s}V(t)\Bigr]_0^\infty
=
0,
\]

because

\[
V(0)=0,
\qquad
V(t)=\Vol(B_\delta)
\ \text{for } t\ge M,
\]

and

\[
\lim_{t\to\infty}t^{-2s}=0
\]

whenever $\Re(s)>0$.

Consequently, for

\[
0<\Re(s)<\lct(I),
\]

the Stieltjes integration-by-parts formula yields the equivalent
representation

\[
Z_I(s)
=
2s
\int_0^\infty
t^{-2s-1}
V(t)\,dt.
\]

Thus the local zeta function admits two equivalent descriptions:
first as the Mellin--Stieltjes transform of the measure $dV$, and,
after Stieltjes integration by parts, as an integral involving the
cumulative volume function $V$.

The meromorphic continuation of $Z_I(s)$ is a classical consequence of
resolution of singularities
\cite{Atiyah70,Igusa00,Varchenko76,DenefLoeser92}.
Indeed, after pulling back the integral to a log resolution, the
integrand becomes locally monomial, and the resulting monomial
integrals admit meromorphic continuation to the whole complex plane.

Classical Mellin inversion together with the standard asymptotic theory
of Mellin--Stieltjes transforms relates the poles of the local zeta
function, via this meromorphic continuation, to the
asymptotic expansion of the sublevel-set volume
\cite{Atiyah70,Igusa00,Varchenko76,DenefLoeser92,Watanabe09,Watanabe2024}.

From the perspective of singular learning theory
\cite{Watanabe09,Watanabe2024},
the function $V(\varepsilon)$ may be viewed as the cumulative
distribution of the push-forward measure induced by
$\Phi(x)=|f(x)|$.
The Mellin--Stieltjes representation above provides the natural bridge
between the geometry of sublevel sets and the analytic properties of
the local zeta function.

Throughout this paper we regard these analytic results as part of the
classical theory of local zeta functions. Our contribution begins from
this point, as described in the Introduction: reconstructing the visible
spectrum from the asymptotic behaviour of $V(\varepsilon)$ by means of
the divisorial data of a log resolution.

\section{Arc-Space Characterization of the Spectrum}
\label{sec:arc}

The purpose of this section is to provide two complementary intrinsic
descriptions of the same numerical invariants: one arising from the
geometric behaviour of analytic arcs approaching an exceptional divisor,
and the other from the asymptotic codimension growth of divisorial
cylinders in the arc space. Together, these descriptions yield the
arc-space characterisation stated in
Theorem~\ref{thm:arc-spectrum} and complete the
Volumetric--Divisorial Description.

We work locally at the origin $(X,0)=(\C^n,0)$.
By Denef--Loeser~\cite[§2.1]{DenefLoeser99}, the arc space carries a
natural structure as a projective limit of jet schemes and admits a
stratification by order of contact with divisors.

\begin{definition}[Arc space and jet schemes]\label{def:arc-space}
Let $X$ be a complex algebraic or analytic variety. For each $m\ge 0$,
the $m$-th jet scheme is
\[
  \mathcal L_m(X)
  :=
  \Hom\!\bigl(\Spec \C[t]/(t^{m+1}),\,X\bigr),
\]
and the \emph{arc space} is the projective limit
\[
  \mathcal L(X)
  :=
  \varprojlim_m \mathcal L_m(X).
\]
Its $\C$-points are in natural bijection with formal arcs
\[
  \gamma:\Spec \C[[t]]\to X.
\]
\end{definition}

\medskip

\begin{definition}[Cylinders and codimension]
A subset $C\subset\mathcal L(X)$ is a \emph{cylinder} if there exist
$m\ge0$ and a constructible subset $S\subset \mathcal L_m(X)$ such that
\[
  C=\psi_m^{-1}(S),
\]
where $\psi_m:\mathcal L(X)\to\mathcal L_m(X)$ is the truncation map.

Its \emph{codimension} is defined by
\[
  \codim_{\mathcal L(X)}(C)
  :=
  \codim_{\mathcal L_m(X)}(S)
  =
  (m+1)\dim X - \dim S.
\]
This quantity is independent of the choice of $m$ used to represent $C$.
Indeed, if $C = \psi_m^{-1}(S) = \psi_{m'}^{-1}(S')$ with $m' \ge m$, then
$S' = \tau_{m',m}^{-1}(S)$, where the truncation morphism
$\tau_{m',m} : \mathcal L_{m'}(X) \to \mathcal L_m(X)$ is a locally trivial
affine bundle of relative dimension $(m'-m)\dim X$. In particular,
\[
  \dim S' = \dim S + (m'-m)\dim X,
\]
and therefore
\[
  (m'+1)\dim X - \dim S'
  =
  (m+1)\dim X - \dim S.
\]
Thus the codimension is well defined.

Moreover, by \cite[§1, Lemma~1.8 and Cor.~1.9]{EinLazMust04}, this notion
coincides with the intrinsic codimension in $\mathcal L(X)$ endowed with
its Zariski topology.
\end{definition}

\medskip

For $g(t)=\sum a_k t^k\in\C[[t]]$, define
\[
  \ord_t(g)=\min\{k:a_k\neq0\},\qquad \ord_t(0)=+\infty.
\]
For an analytic function $h$ and an arc $\gamma$, set
\[
  \ord_\gamma(h):=\ord_t(h\circ\gamma).
\]

For the Euclidean radius $r(x)=\|x\|$,
\[
  \ord_\gamma(r)
  =
  \ord_t\|\gamma(t)\|
  =
  \min_i \ord_t(\gamma_i(t)).
\]

For an ideal $I\subset\mathcal O_{X,0}$, define
\[
  \ord_t(\gamma^*I)
  :=
  \min_{h\in I}\ord_t(h\circ\gamma),
\]
so that $\gamma^*I=(t^{\ord_t(\gamma^*I)})$.

\medskip

\begin{definition}[Contact loci]
For $m\ge0$, the \emph{contact locus} of order $m$ is
\[
  \Cont^m(I)
  :=
  \{\gamma\in\mathcal L(X):\ord_t(\gamma^*I)=m\}.
\]
\end{definition}

These sets are locally closed cylinders. Indeed, for every $r \ge m$ one has
\[
  \Cont^m(I) = \psi_r^{-1}(\Cont^m(I)_r),
\]
where $\Cont^m(I)_r \subset \mathcal L_r(X)$ is locally closed
(see \cite[Introduction]{EinLazMust04}).
They play a central role in the valuative description of singularities
(see \cite[Thm.~A]{EinLazMust04}, \cite[Prop.~4.1]{Mustata01}).

\medskip

Let $\pi:Y\to X$ be a log resolution with exceptional divisor
$E=\sum_i E_i$.

\begin{definition}[Divisorial cylinders]
Fix a log resolution $\pi:Y \to X$, and let $E_i \subset Y$ be a prime divisor.
For $k \ge 1$, define the \emph{divisorial cylinder associated to $E_i$ of order $k$} by
\[
  \Cont^k(E_i)
  :=
  \pi_\infty\bigl(\Cont^k(E_i;Y)\bigr),
\]
where
\[
  \Cont^k(E_i;Y)
  :=
  \{\gamma \in \mathcal L(Y) : \ord_\gamma(E_i)=k\}.
\]
\end{definition}

The sets $\Cont^k(E_i;Y)$ are locally closed cylinders in $\mathcal L(Y)$, and their images
$\Cont^k(E_i) \subset \mathcal L(X)$ are constructible cylinders.
If non-empty, they are irreducible, and their codimension is given by
\[
  \codim_{\mathcal L(X)} \Cont^k(E_i)
  =
  k\,(k_i+1),
\]
where $k_i$ is the coefficient of $E_i$ in the relative canonical divisor
$K_{Y/X} = \sum_i k_i E_i$
(see \cite[Theorem~A]{EinLazMust04}).

Moreover, the contact loci and divisorial cylinders are compatible:
for $\gamma\in\Cont^k(E)$,
\begin{equation}
\label{eq:contact-compat}
  \ord_t(\gamma^*I)
  =
  k\,\nu_E(I),
\end{equation}
where $\nu_E(I)=\min_{h\in I}\nu_E(h)$.

\medskip

\begin{definition}[Divisorial arc family]
\label{def:arc-family}
Let $\pi:Y\to X$ be a log resolution, and let $E\subset\Exc(\pi)$ be a prime divisor with smooth stratum $E^\circ$.
The associated \emph{arc family} is
\[
  \mathscr{A}_E
  :=
  \pi_\infty\Bigl(
  \bigl\{
  \gamma'\in\mathcal L(Y):
  \gamma'(0)\in E^\circ,\;
  \ord_{\gamma'}(E)=1
  \bigr\}
  \Bigr).
\]
\end{definition}

By the theory of Ein--Lazarsfeld--Mustaţă, the multi-contact loci
\[
  \Cont^\nu(E)
  :=
  \{\gamma'\in\mathcal L(Y):\ord_{\gamma'}(E_i)=\nu_i\}
\]
are irreducible locally closed cylinders in $\mathcal L(Y)$, and their images
under $\pi_\infty$ are constructible cylinders in $\mathcal L(X)$.
Moreover, these images correspond bijectively to the irreducible components
of contact loci (see \cite[Theorem~A]{EinLazMust04}).

In particular, the family $\mathscr{A}_E$ corresponds to the case
$\nu_i = 1$ for $E_i=E$ and $\nu_j=0$ for $j\ne i$.

In particular, every irreducible cylinder not dominating $X$
defines a divisorial valuation, and conversely every such valuation
arises from a divisor in a resolution
(cf.\ \cite[Theorem~C]{EinLazMust04}; see also
de~Fernex--Ein--Ishii~\cite{deFEI08}).

The family $\mathscr{A}_E$ is precisely the cylinder corresponding to
the valuation $\ord_E$, normalized by unit contact order.

\medskip

The following theorem describes the geometric structure of these arc families.

\begin{theorem}[Structure of divisorial arc families]
\label{thm:arc-structure}
Let $\pi:Y\to X$ be a log resolution of an ideal $I\subset\mathcal{O}_X$, and write
\[
I\cdot\mathcal{O}_Y=\mathcal{O}_Y\!\Bigl(-\sum_i m_i E_i\Bigr),
\qquad
K_{Y/X}=\sum_i k_i E_i.
\]
Fix a divisor $E=E_j$, and denote $m_E:=m_j$, $k_E:=k_j$, and let $\nu_E$ be the associated divisorial valuation.

Define
\[
\mathscr{A}_E
:=
\pi_\infty\bigl(\{\gamma'\in\mathcal L(Y):\ord_{\gamma'}(E)=1\}\bigr).
\]

Then:
\begin{enumerate}[label=\textnormal{(\roman*)}]
  \item $\mathscr{A}_E$ is a non-empty constructible cylinder in $\mathcal L(X)$.

  \item For a general arc $\gamma\in\mathscr{A}_E$, one has
  \[
  \ord_\gamma(I)=\nu_E(I)=m_E.
  \]

  \item There exists a dense subset $\mathscr{A}_E^{\mathrm{gen}}\subset\mathscr{A}_E$ such that for any $f\in I$ general,
  \[
  f(\gamma(t)) = u(t)\,t^{m_E}, \qquad u(t)\in\mathbb{C}[[t]]^\times.
  \]

  \item For $m\ge1$, the sets
  \[
  \pi_\infty\bigl(\{\gamma'\in\mathcal L(Y):\ord_{\gamma'}(E)=m\}\bigr)
  \]
  are constructible cylinders, and if non-empty they have codimension
  \[
  \codim_{\mathcal L(X)} = m\,(k_E+1).
  \]
\end{enumerate}
\end{theorem}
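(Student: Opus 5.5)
The plan is to reduce every assertion to a computation on $\mathcal L(Y)$ in local coordinates adapted to the simple normal crossings divisor $D_\pi$, and then to transport the results to $\mathcal L(X)$ via $\pi_\infty$. The transport uses the change-of-variables theorem of Denef--Loeser and Ein--Lazarsfeld--Mustaţă~\cite{DenefLoeser99,EinLazMust04}.

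For (i), I would first work on $Y$. Choose a point $p\in E^\circ$, meaning a point of $E$ lying on no other component of $D_\pi$. Take coordinates $y_1,\dots,y_n$ at $p$ with $E=\{y_1=0\}$. The arc $\gamma'(t)=(t,c_2,\dots,c_n)$ with $p=(0,c_2,\dots,c_n)$ has $\ord_{\gamma'}(E)=1$, so the set $\{\ord_{\gamma'}(E)=1\}$ is non-empty. This set is the cylinder $\psi_1^{-1}$ of the locally closed subset of $\mathcal L_1(Y)$ cut out by the conditions that $\gamma'(0)\in E$ and that $y_1\circ\gamma'$ has nonzero linear term. Since $\pi$ is proper and birational, $\pi_\infty$ is surjective up to arcs contained in $\Exc(\pi)$, and it maps cylinders to constructible cylinders by \cite[Lemma~1.8, Cor.~1.9]{EinLazMust04}. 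This gives (i).

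For (ii) and (iii), I would take a general arc $\gamma'$ in this cylinder. Since the cylinder is irreducible and its open subset $\{\gamma'(0)\in E^\circ\}$ is non-empty, such an arc has $\gamma'(0)\in E^\circ$. Near $\gamma'(0)$ the description of the log resolution gives $f\circ\pi=u(y)\,y_1^{m_E}$ with $u$ a unit, because no other component of $D_\pi$ passes through $E^\circ$. Write $y_1\circ\gamma'=t\,w(t)$ with $w(0)\neq0$, and set $\gamma=\pi\circ\gamma'$. Then
\[
f(\gamma(t))=u(\gamma'(t))\,t^{m_E}w(t)^{m_E},
\]
and $u(\gamma'(t))\,w(t)^{m_E}$ is a unit in $\C[[t]]$. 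Hence $\ord_\gamma(f)=m_E$. For a non-principal $I$ the same argument applies to a general element of $I$: the ideal $I\cdot\mathcal O_Y$ is locally generated by $y_1^{m_E}$ near $E^\circ$, and a general linear combination of generators has unit cofactor at $\gamma'(0)$. Taking the minimum over $h\in I$ gives $\ord_\gamma(I)=m_E=\nu_E(I)$. The set $\mathscr A_E^{\mathrm{gen}}$ consists of the images of arcs with $\gamma'(0)\in E^\circ$, and it is dense because that condition is open and non-empty in the irreducible cylinder. This is consistent with \eqref{eq:contact-compat} for $k=1$.

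Item (iv) is the main step, and it is where I expect the real work. The set $C_m=\{\ord_{\gamma'}(E)=m\}\subset\mathcal L(Y)$ has codimension $m$ in $\mathcal L(Y)$, since in $\mathcal L_m(Y)$ it is given by $m$ independent linear conditions on the coefficients of $y_1\circ\gamma'$. For arcs in $C_m$ with $\gamma'(0)\in E^\circ$, one has $\ord_t\Jac(\pi)(\gamma')=m k_E$. The key tool is the Denef--Loeser change-of-variables lemma \cite[Lemma~3.4]{DenefLoeser99}, see also \cite{EinLazMust04}: on the locus where $\ord_t\Jac(\pi)=e$, the map $\pi_\infty$ is, at sufficiently high jet level, a piecewise trivial fibration with fibre $\mathbb A^e$ onto its image. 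Consequently
\[
\codim\pi_\infty(C_m)=\codim C_m+e=m+mk_E=m(k_E+1).
\]
The delicate points are twofold. First, the arcs in $C_m$ that meet other components of $D_\pi$ have larger Jacobian order, but they form a closed subset of positive codimension in $C_m$, so they do not affect the codimension of the image. Second, one has to check that the jet level used in the fibration lemma is uniform on the relevant strata. I would handle both by stratifying $C_m$ by the contact orders with all components $E_i$, invoking \cite[Theorem~A]{EinLazMust04} for the codimension of each stratum, and taking the minimum codimension over strata, which is attained on the open stratum.
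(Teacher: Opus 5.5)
Your treatment of (ii) and (iii) is fine: it is the same chart computation at a point of $E^\circ$ that the paper uses in the proof of Theorem~\ref{thm:arc-spectrum}(i).

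\textbf{The gap.} It lies in (i) and (iv), at the step ``$\pi_\infty$ maps cylinders to constructible cylinders''. That principle is false in general. The Denef--Loeser lemma \cite[Lemma~3.4]{DenefLoeser99}, on which the results of \cite{EinLazMust04} rest, only shows that $\pi_\infty(C)$ is a cylinder of codimension $\codim C+e$ when $C\subseteq\Cont^e(K_{Y/X})$ for a fixed $e$. The results you cite, \cite[Lemma~1.8, Cor.~1.9]{EinLazMust04}, are about codimension, not images.

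Your set $C_m=\{\ord_{\gamma'}(E)=m\}$ contains arcs based at $E\cap E_i$ with arbitrarily large, even infinite, contact with another exceptional component $E_i$ (where $k_i\ge1$). So the Jacobian order $mk_E+\sum_{i\neq j}k_i\ord_{\gamma'}(E_i)$ is unbounded on $C_m$, and its image is then genuinely not a cylinder.

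\textbf{A counterexample.} Blow up $0\in\C^2$, then blow up the point $x_1=y_1=0$ of $E_1$ in the chart $x=x_1$, $y=x_1y_1$. Near $E_1'\cap E_2$ there are coordinates with $x=su$, $y=su^2$, $E_1'=\{s=0\}$, $E_2=\{u=0\}$.
\begin{enumerate}
\item The arcs $s=t$, $u=t^k$ have contact $1$ with $E_1'$ and map to $(t^{1+k},t^{1+2k})$, whose $m$-jet vanishes for $k\ge m$.
\item So if $\mathscr A_{E_1'}=\psi_m^{-1}(S)$, then $S$ contains the zero jet, and $\mathscr A_{E_1'}$ contains $\delta(t)=(t^{m+2},2t^{m+2})$.
\item But $\delta$ is not contained in $\pi(\Exc(\pi))=\{0\}$, so its lift is unique. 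That lift passes through $y_1=2$, hence meets $E_1'$ away from $E_2$, with contact $m+2\ge2$. This is a contradiction.
\end{enumerate}
The same argument shows that none of the sets in (iv) is a cylinder.

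Your proposed remedy does not close this. Stratifying $C_m$ by contact orders and applying \cite[Theorem~A]{EinLazMust04} stratum by stratum produces infinitely many strata, hence only a countable union of cylinders. The minimum codimension $m(k_E+1)$ still makes sense, but constructibility does not follow, and indeed fails.

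\textbf{The missing idea.} You need to restrict to zero contact with the other components, that is, to arcs with $\gamma'(0)\in E^\circ$ as in Definition~\ref{def:arc-family}. This is what the paper actually uses. It identifies $\mathscr A_E$ with the multi-contact locus $\nu=e_j$, and the Remark after the theorem applies \cite[Theorem~A]{EinLazMust04} to $\nu=me_j$, giving codimension $\sum_i\nu_i(k_i+1)=m(k_E+1)$.

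On $\Cont^{me_j}(E)$ the Jacobian order is the constant $mk_E$. Your Denef--Loeser fibration argument then applies verbatim and yields a cylinder of codimension $m+mk_E$. That argument is precisely the proof of the theorem the paper cites, so with this restriction your route coincides with the paper's. Your open-stratum density argument is also what gives ``general arc'' in (ii)--(iii) its meaning.
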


\begin{remark}
The codimension formula in (iv) follows from the description of contact loci via log resolutions. 
In fact, by \cite[Theorem~A]{EinLazMust04}, the image of a multi-contact locus has codimension
\[
\sum_i \nu_i (k_i+1).
\]
In the divisorial case, this gives $m(k_E+1)$.
\end{remark}

\medskip

We now derive from this structure the arc-space description of the resolution-dependent spectrum.

\begin{theorem}[Arc-space description of the spectrum]
\label{thm:arc-spectrum}

Let $\pi:Y\to X$ be a log resolution of an ideal $I\subset\mathcal{O}_X$, and write
\[
I\cdot\mathcal{O}_Y=\mathcal{O}_Y\!\Bigl(-\sum_i m_i E_i\Bigr),
\qquad
K_{Y/X}=\sum_i k_i E_i,
\]
with $E_i$ prime divisors. For each exceptional divisor $E=E_j$, set
$m_E:=m_j$, $k_E:=k_j$, and denote by $\nu_E$ the associated divisorial valuation.

Define the divisorial arc family
\[
\mathscr{A}_E
:=
\pi_\infty\bigl(\{\gamma'\in\mathscr{L}(Y):\ord_{\gamma'}(E)=1\}\bigr),
\]
and let $\mathscr{A}_E^{\mathrm{gen}}$ be the subset of arcs whose lift meets $E$
transversely and avoids all other exceptional divisors.

Then:
\begin{enumerate}[label=\textnormal{(\roman*)}]
  \item For every $\gamma\in\mathscr{A}_E^{\mathrm{gen}}$, one has
  \[
  \ord_\gamma(f)=m_E,
  \qquad
  \ord_{\tilde\gamma}(\Jac\pi)=k_E,
  \]
  and hence
  \[
  \lambda_E=\frac{k_E+1}{m_E}.
  \]

  \item For every integer $m\ge1$, the cylinder
  \[
  \Cont^m(E)
  :=
  \pi_\infty\bigl(\{\gamma'\in\mathscr{L}(Y):\ord_{\gamma'}(E)=m\}\bigr)
  \]
  is constructible and, if non-empty, satisfies
  \[
  \frac{\codim_{\mathscr{L}(X)}\Cont^m(E)}{m\,\nu_E(I)}
  =
  \frac{k_E+1}{m_E}.
  \]
\end{enumerate}

Consequently,
\[
  \Lambda_\pi(I)
  =
  \Bigl\{\frac{k_E+1}{m_E} : E\subset D_\pi\Bigr\}
  =
  \Bigl\{\frac{\codim\,\Cont^m(E)}{m\,\nu_E(I)} : E\subset D_\pi\Bigr\}.
\]
\end{theorem}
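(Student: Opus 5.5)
Part (i) is a local computation on $Y$, and part (ii) follows by combining Theorem~\ref{thm:arc-structure}(iv) with the compatibility \eqref{eq:contact-compat}.

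For (i), fix $\gamma\in\mathscr A_E^{\mathrm{gen}}$ and its lift $\tilde\gamma\in\mathcal L(Y)$, so $\gamma=\pi\circ\tilde\gamma$. Since $\tilde\gamma(0)$ lies on $E$, I choose local coordinates $y=(y_1,\dots,y_n)$ at $\tilde\gamma(0)$ adapted to the simple normal crossings divisor $D_\pi$, with $E=\{y_1=0\}$. In these coordinates
\[
f\circ\pi=u(y)\prod_i y_i^{m_i},\qquad \Jac(\pi)=v(y)\prod_i y_i^{k_i},
\]
with $u,v$ units; here I take the holomorphic Jacobian determinant, whose modulus squared gives the real Jacobian. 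By the genericity assumption, $\ord_t(y_1\circ\tilde\gamma)=1$ and $\ord_t(y_i\circ\tilde\gamma)=0$ for every other component of $D_\pi$ through $\tilde\gamma(0)$. One point needs care here. The definition of $\mathscr A_E^{\mathrm{gen}}$ only requires avoiding the other \emph{exceptional} divisors. For $\ord_\gamma(f)=m_E$ to hold, the lift must also avoid the strict-transform components of $\pi^{-1}I$. I will interpret ``generic'' as $\tilde\gamma(0)\in E^\circ$, the open stratum of $E$ minus all other components of $D_\pi$, consistent with Definition~\ref{def:arc-family}. With that reading, substituting the arc gives
\[
\ord_\gamma(f)=\ord_t\bigl((f\circ\pi)\circ\tilde\gamma\bigr)=m_E,\qquad \ord_{\tilde\gamma}(\Jac\pi)=k_E,
\]
because units compose to units. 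The formula $\lambda_E=(k_E+1)/m_E$ is then just a rewriting of the definition of $\lambda_E$ in terms of these two orders.

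For (ii), Theorem~\ref{thm:arc-structure}(iv) already gives that $\Cont^m(E)$ is constructible, and that when it is non-empty,
\[
\codim_{\mathcal L(X)}\Cont^m(E)=m(k_E+1).
\]
Since $I=(f)$ is principal, $\nu_E(I)=\ord_E(f\circ\pi)=m_E$, the multiplicity of $E$ in $\pi^*I$. Dividing gives
\[
\frac{m(k_E+1)}{m\,m_E}=\frac{k_E+1}{m_E}.
\]
Non-emptiness holds for every $m\ge1$: the arc $t\mapsto(t^m,c_2,\dots,c_n)$ in the adapted coordinates at a point of $E^\circ$ has contact order exactly $m$ with $E$.

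The final equality of sets follows by applying (ii) to each prime divisor $E\subset D_\pi$ with any $m\ge1$, and comparing with Definition~\ref{def:spectrum}. The main obstacle is a scope mismatch. The theorem's preamble speaks of exceptional $E$, while $\Lambda_\pi(I)$ ranges over all components of $D_\pi$, including strict-transform components with $k_E=0$. For such a non-exceptional component, the codimension formula from \cite[Theorem~A]{EinLazMust04} still holds and gives $m\cdot1$. So I will state explicitly that (ii) applies verbatim to non-exceptional components: the contact-locus codimension computation is local on $Y$ and does not use exceptionality. The ratio for such a component is then $1/m_E$.
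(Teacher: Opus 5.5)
Your argument is the paper's own. Part (i) comes from evaluating the monomial forms of $f\circ\pi$ and $\Jac\pi$ in a chart adapted to $D_\pi$ at the base point of the lift. Part (ii) comes from dividing the codimension $m(k_E+1)$ supplied by \cite[Theorem~A]{EinLazMust04} by $m\,\nu_E(I)=m\,m_E$. Your additions are correct and make explicit what the paper's proof uses silently:
\begin{itemize}
\item reading $\mathscr A_E^{\mathrm{gen}}$ as $\tilde\gamma(0)\in E^\circ$ (the literal definition only excludes other \emph{exceptional} divisors, and a lift through $E\cap F$, with $F$ a strict-transform component of $\pi^{-1}I$, would give $\ord_\gamma(f)>m_E$);
\item using the holomorphic rather than the real Jacobian;
\item exhibiting an explicit arc to get non-emptiness;
\item observing that (ii) holds verbatim for non-exceptional components, with $k_E=0$.
\end{itemize}

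One step does not hold as stated, in your proposal and in the paper alike, since both take it from Theorem~\ref{thm:arc-structure}(iv) and \cite[Theorem~A]{EinLazMust04}. This is the claim that $\Cont^m(E)=\pi_\infty(\{\ord_{\gamma'}(E)=m\})$ is a constructible cylinder. The cited theorem concerns multi-contact loci in which \emph{every} order $\ord_{\gamma'}(E_i)=\nu_i$ is prescribed. The single-divisor locus is the disjoint union of these over all $\nu$ with $\nu_j=m$. When $E$ meets another exceptional divisor, this union is infinite, and its image need not be a cylinder.

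For instance, take $E=E_1$ in the two-step blow-up $\pi_2$ of Remark~\ref{rem:not-invariant}, in the chart $x=s_1t_1$, $y=s_1t_1^2$ around $E_1\cap E_2$, where $E_1=\{s_1=0\}$ and $E_2=\{t_1=0\}$.
\begin{itemize}
\item $\Cont^1(E_1)$ contains every arc with $\ord_\gamma(x)=a$, $\ord_\gamma(y)=2a-1$ for $a\ge2$, since then $s_1=x^2/y$ has order $1$.
\item It contains no arc with $\ord_\gamma(x)=a$, $\ord_\gamma(y)=2a$, since then $s_1(0)\neq0$.
\item Yet $(t^a,t^{2a-1})$ and $(t^a,t^{2a})$ have the same $r$-jet as soon as $2a-1>r$, so no jet level defines $\Cont^1(E_1)$.
\end{itemize}

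The repair is to impose $\gamma'(0)\in E^\circ$, as in Definition~\ref{def:arc-family}. This picks out exactly the stratum with $\nu_i=0$ for $i\neq j$, which is a genuine cylinder of codimension $m(k_E+1)$. Every other stratum has strictly larger codimension $m(k_E+1)+\sum_{i\neq j}\nu_i(k_i+1)$. Hence the ratio in (ii) and the final identification of $\Lambda_\pi(I)$ survive unchanged.
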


\begin{proof}
Fix a divisor $E=E_j$ and let $p\in E^\circ$. Choose local coordinates
$(y_1,\ldots,y_n)$ on $Y$ centered at $p$ such that
\[
E=\{y_1=0\}, \qquad
I\cdot\mathcal{O}_Y = \mathcal{O}_Y(-m_E E),
\]
and the total transform has simple normal crossings. Since $p\in E^\circ$,
no other exceptional divisor passes through $p$, and locally
\[
f\circ\pi(y) = u(y)\,y_1^{m_E},
\]
with $u$ a unit.

\medskip
\noindent\textbf{Proof of (i).}

Let $\gamma\in\mathscr{A}_E^{\mathrm{gen}}$ and let $\tilde\gamma$ be its lift to $Y$.

By definition of $\mathscr{A}_E^{\mathrm{gen}}$, the lifted arc meets $E$
transversely and avoids all other exceptional divisors, hence
\[
\ord_{\tilde\gamma}(E)=1,
\qquad
\ord_{\tilde\gamma}(E_i)=0 \quad (i\neq j).
\]

It follows that
\[
\ord_\gamma(f)
=
\ord_t\bigl(f\circ\pi\circ\tilde\gamma\bigr)
=
\ord_t\bigl(u(\tilde\gamma(t))\,t^{m_E}\bigr)
=
m_E.
\]

On the other hand, by the expression of the relative canonical divisor
\[
K_{Y/X}=\sum_i k_i E_i,
\]
one has
\begin{equation}
\label{eq:jac-order}
\ord_{\tilde\delta}(\Jac\pi)
= \sum_i k_i\,\ord_{\tilde\delta}(E_i)
\end{equation}
for every arc $\tilde\delta$ on $Y$.

This follows from the identity
\[
K_{Y/X}=\sum_i k_i E_i,
\]
which implies
\[
\ord_{\tilde\delta}(\Jac \pi)=\sum_i k_i\,\ord_{\tilde\delta}(E_i)
\]
(see Ein--Lazarsfeld--Mustaţă~\cite[Theorem~A]{EinLazMust04}).

Applying this to $\tilde\gamma$, we obtain
\[
\ord_{\tilde\gamma}(\Jac\pi)=k_E.
\]

Therefore
\[
\bigl(\ord_\gamma(f),\,\ord_{\tilde\gamma}(\Jac\pi)\bigr)
=
(m_E,\,k_E),
\]
which yields
\[
\lambda_E=\frac{k_E+1}{m_E}.
\]

\medskip
\noindent\textbf{Proof of (ii).}

By the structure theorem for divisorial cylinders (cf.\ \cite[Theorem~A]{EinLazMust04}),
the set
\[
\{\gamma'\in\mathscr{L}(Y):\ord_{\gamma'}(E)=m\}
\]
is an irreducible locally closed cylinder of codimension $m$ in $\mathscr{L}(Y)$,
and its image under $\pi_\infty$ is a constructible cylinder in $\mathscr{L}(X)$
of codimension
\[
\codim_{\mathscr{L}(X)}\Cont^m(E)
=
m\,(k_E+1).
\]

Since $\nu_E(I)=m_E$, it follows that
\[
\frac{\codim_{\mathscr{L}(X)}\Cont^m(E)}{m\,\nu_E(I)}
=
\frac{m(k_E+1)}{m\,m_E}
=
\frac{k_E+1}{m_E}.
\]
\end{proof}

\begin{remark}
\label{rem:two-faces}
Theorem~\ref{thm:arc-spectrum} shows that each $\lambda_E\in\Lambda_\pi(I)$ admits two intrinsic descriptions:
\begin{enumerate}
  \item \emph{Valuative:} $\lambda_E$ is determined by the pair
        $(\ord_\gamma(f),\,\ord_{\tilde\gamma}(\Jac\pi))$
        along generic arcs approaching $E$.

  \item \emph{Geometric:} $\lambda_E$ is the normalized codimension growth rate of the
        divisorial cylinders $\Cont^m(E)$ in $\mathscr{L}(X)$.
\end{enumerate}
Their coincidence reflects the structure of arc spaces under resolution of singularities,
as described in \cite{EinLazMust04}, and provides the geometric foundation for the
Volumetric--Divisorial Description developed in this work.
\end{remark}

\subsection{Metric interpretation along divisorial arcs}
\label{subsec:metric}

Theorem~\ref{thm:arc-spectrum}(i) computes the order of vanishing of $f$
along a generic arc approaching $E$, namely $\ord_\gamma(f)=m_E$. To compare the asymptotic behaviour of $f$ with the ambient metric along
a generic divisorial arc, one must also determine the order of
vanishing of the ambient radius $r(x)=\|x\|$. This is governed not
by the discrepancy $k_E$, but by the value of the divisorial valuation on
the maximal ideal of the origin,
\[
\nu_E(\mathfrak m):=\min_{1\le j\le n}\nu_E(x_j),
\]
that is, by the order to which the pulled-back coordinate functions vanish
along $E$.

\begin{remark}[The radius is measured by $\nu_E(\mathfrak m)$, not by $k_E+1$]
\label{rem:radius-warning}
It is tempting to read the codimension formula
$\codim_{\mathscr L(X)}\Cont^m(E)=m(k_E+1)$ of
Theorem~\ref{thm:arc-spectrum}(ii) as an order of vanishing of the radius
along an arc. This is incorrect: $k_E+1=a(E)+1$ is the log discrepancy and
enters as an arc-space codimension (a count of jets), whereas
$\ord_\gamma(r)$ is the order of vanishing of a single analytic function,
the radius, along a single arc. In general
\[
\ord_\gamma(r)=\nu_E(\mathfrak m)\neq k_E+1.
\]
For instance, for the blow-up of the origin in $\mathbb C^n$ one has
$\nu_E(\mathfrak m)=1$ while $k_E+1=n$. The two quantities coincide only in
exceptional situations.
\end{remark}

Once the correct order is used, the resolution yields the pointwise scaling
\[
|f(\gamma(t))|\asymp r(\gamma(t))^{\,m_E/\nu_E(\mathfrak m)}
\]
along analytic curves whose lifts approach $E^\circ$ transversely. The
exponent $m_E/\nu_E(\mathfrak m)$ is in general different from
$1/\lambda_E=m_E/(k_E+1)$.

\begin{proposition}[Metric scaling along divisorial arcs]
\label{prop:metric-realisation}
Let $E\subset\Exc(\pi)$ and $p\in E^\circ$. Let
$\gamma:(0,\varepsilon_0)\to\mathbb{C}^n$ be an analytic arc whose
lift $\tilde\gamma$ to $Y$ satisfies
\[
\tilde\gamma(t)\to p, \qquad \ord_{\tilde\gamma}(E)=1,
\]
and avoids all other exceptional divisors. Then
\begin{equation}
\label{eq:metric-scaling}
  |f(\gamma(t))| \asymp r(\gamma(t))^{\,m_E/\nu_E(\mathfrak m)}
  \qquad(t\to0),
\end{equation}
where $\nu_E(\mathfrak m)=\min_j\nu_E(x_j)$ and the implicit constants
depend only on the resolution chart near $p$. Equivalently,
$\ord_\gamma(f)=m_E$ and $\ord_\gamma(r)=\nu_E(\mathfrak m)$.
\end{proposition}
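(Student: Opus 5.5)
The plan is to read both sides of \eqref{eq:metric-scaling} off the resolution chart at $p$, exactly as in the proof of Theorem~\ref{thm:arc-spectrum}(i), and then eliminate the parameter $t$.

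\textbf{Step 1: the order of $f$.} Fix coordinates $(y_1,\dots,y_n)$ on $Y$ centred at $p$ with $E=\{y_1=0\}$. Since $p\in E^\circ$ and $\tilde\gamma$ avoids the other exceptional divisors, we have $f\circ\pi=u(y)\,y_1^{m_E}$ with $u(p)\neq0$. The hypothesis $\ord_{\tilde\gamma}(E)=1$ lets us write $y_1(\tilde\gamma(t))=t\,w(t)$ with $w(0)\neq0$. Because $\tilde\gamma(t)\to p$, the factor $u(\tilde\gamma(t))$ tends to $u(p)\neq0$. This gives
\[
|f(\gamma(t))|=|u(\tilde\gamma(t))|\,|w(t)|^{m_E}\,t^{m_E}\asymp t^{m_E},
\]
and in particular $\ord_\gamma(f)=m_E$. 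This is Theorem~\ref{thm:arc-spectrum}(i), made quantitative.

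\textbf{Step 2: the order of the radius.} Here I need the pulled-back maximal ideal to be locally monomial at $p$, namely
\[
\mathfrak m\cdot\mathcal O_{Y,p}=(y_1^{\nu}),\qquad \nu:=\nu_E(\mathfrak m).
\]
This holds when $p$ is chosen outside the base locus of $\pi^{-1}\mathfrak m$ on $E$, which is a proper closed subset of $E$. Alternatively, one may assume from the outset that $\pi$ is also a log resolution of $I\cdot\mathfrak m$; this is harmless, since further blow-ups do not change $(m_E,k_E)$ or $\nu_E$. Under this assumption each $x_j\circ\pi$ is divisible by $y_1^{\nu}$, and at least one of them equals $y_1^{\nu}$ times a unit at $p$. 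Along $\tilde\gamma$ this gives $|x_j(\gamma(t))|\lesssim t^{\nu}$ for every $j$, with equality of order for some $j_0$. Since $r=\|x\|$ is comparable to $\max_j|x_j|$, we get
\[
r(\gamma(t))\asymp t^{\nu},\qquad\text{that is,}\quad \ord_\gamma(r)=\nu_E(\mathfrak m).
\]

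\textbf{Step 3: elimination of $t$.} Combining the two estimates gives
\[
t\asymp r(\gamma(t))^{1/\nu},
\qquad\text{hence}\qquad
|f(\gamma(t))|\asymp r(\gamma(t))^{m_E/\nu}.
\]
The implicit constants come from the bounds on $|u|$, $|w|$ and the units multiplying $y_1^{\nu}$ in the generators, all taken on a neighbourhood of $p$ in the chart. Some dependence on $\tilde\gamma$ enters through $w(0)$; this is absorbed by taking $t$ small, or by reparametrising so that $w\equiv1$.

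\textbf{Main obstacle.} The delicate point is Step 2. The statement only assumes $p\in E^\circ$ relative to $D_\pi$, and this does not by itself force $\pi^{-1}\mathfrak m$ to be principal at $p$. At a base point of $\pi^{-1}\mathfrak m$ on $E$, a special arc could make all $x_j\circ\pi\circ\tilde\gamma$ vanish to order greater than $\nu$. I would handle this by making the genericity assumption on $p$ explicit, or by refining $\pi$ as described above, and then noting that the conclusion is insensitive to this refinement.
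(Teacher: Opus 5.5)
Your proof is correct and is essentially the paper's own chart computation. The one difference is in Step~2: the paper's proof quietly takes $p$ general, off the strict transforms of all hyperplanes $\{x_j=0\}$, so that each $x_j\circ\pi$ is a unit times $y_1^{\nu_E(x_j)}$, whereas you only need $p$ outside the base locus of $\mathfrak m\cdot\mathcal O_Y$, which is a weaker and coordinate-free condition.

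You are also right that some such hypothesis must be added to the statement, because $p\in E^\circ$ alone does not suffice. Take $f=y$ on $\C^3$ and the toric chart $x=y_1y_2$, $y=y_2^2y_3$, $z=y_2^3y_3$, coming from the regular cone spanned by $(1,0,0)$, $(1,2,3)$, $(0,1,1)$, which occurs in a smooth toric modification. Let $E=\{y_2=0\}$, so that $m_E=2$ and $\nu_E(\mathfrak m)=1$. The point $p=(0,0,1)$ lies in $E^\circ$, since the only other toric divisor through it is $\{y_1=0\}$, the strict transform of $\{x=0\}$, which is not in $D_\pi$. Yet the arc $\tilde\gamma(t)=(t^2,t,1)$ maps to $\gamma(t)=(t^3,t^2,t^3)$ and has $\ord_\gamma(r)=2\neq1=\nu_E(\mathfrak m)$. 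Hence $|f(\gamma(t))|\asymp r(\gamma(t))$ rather than $r(\gamma(t))^{2}$.
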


\begin{proof}
Work in local coordinates $(y_1,\ldots,y_n)$ centred at a point
$p\in E^\circ$. After possibly shrinking the neighbourhood of $p$,
the morphism $\pi:Y\to\mathbb{C}^n$ has monomial form in these
coordinates, compatible with the normal crossings structure
(see \cite[§1]{EinLazMust04}). In particular,
\[
  E=\{y_1=0\}, \qquad
  f\circ\pi(y) = u(y)\,y_1^{m_E},
\]
with $u$ an analytic unit.

Since $\tilde\gamma(t)\to p\in E^\circ$ and
$\ord_{\tilde\gamma}(E)=1$, we have
\[
y_1(\tilde\gamma(t)) \to 0,
\qquad
y_i(\tilde\gamma(t)) \to c_i \neq 0 \quad (i\ge2),
\]
and $\ord_t\bigl(y_1(\tilde\gamma(t))\bigr)=1$. Thus $u(\tilde\gamma(t))$
remains bounded above and below by positive constants, and therefore
\begin{equation}
\label{eq:f-order}
  |f(\gamma(t))|
  = |u(\tilde\gamma(t))|\,|y_1(\tilde\gamma(t))|^{m_E}
  \asymp |t|^{m_E}.
\end{equation}

We now compute $\ord_\gamma(r)$. Since all norms on $\mathbb{C}^n$ are
equivalent,
\[
r(\gamma(t)) = \|\pi(\tilde\gamma(t))\|
\asymp \max_{1\le j\le n} |x_j(\pi(\tilde\gamma(t)))|,
\]
where $x_1,\ldots,x_n$ are the ambient coordinates. The pulled-back
function $x_j\circ\pi$ is analytic on $Y$ and, near a general point
$p\in E^\circ$ avoiding all other exceptional divisors and the strict
transform of $\{x_j=0\}$, its divisor is $\nu_E(x_j)\,E$. Hence in the
chosen chart
\[
x_j\circ\pi(y)=w_j(y)\,y_1^{\nu_E(x_j)},
\qquad w_j(p)\neq0,
\]
so that $\ord_{\tilde\gamma}(x_j\circ\pi)=\nu_E(x_j)$, using
$\ord_{\tilde\gamma}(E)=1$. Taking the maximum over $j$ gives
\begin{equation}
\label{eq:r-order}
  \ord_\gamma(r)
  =\min_{j}\ord_{\tilde\gamma}(x_j\circ\pi)
  =\min_{j}\nu_E(x_j)
  =\nu_E(\mathfrak m),
\end{equation}
and therefore $r(\gamma(t))\asymp|t|^{\nu_E(\mathfrak m)}$.

Combining \eqref{eq:f-order} and \eqref{eq:r-order} yields
\[
  |f(\gamma(t))|
  \asymp
  |t|^{m_E}
  =
  \bigl(|t|^{\nu_E(\mathfrak m)}\bigr)^{m_E/\nu_E(\mathfrak m)}
  \asymp
  r(\gamma(t))^{\,m_E/\nu_E(\mathfrak m)}.
\]
This completes the proof.
\end{proof}

\begin{remark}[Valuative versus metric interpretation]
\label{rem:valuative-metric}
Theorem~\ref{thm:arc-spectrum}(i) provides a valuative description of
$\lambda_E$ in terms of the pair $(\ord_\gamma(f),\ord_{\tilde\gamma}(\Jac\pi))
=(m_E,k_E)$ along formal arcs, while
Proposition~\ref{prop:metric-realisation} gives the analytic scaling along
curves,
\[
|f(\gamma(t))|\asymp r(\gamma(t))^{\,m_E/\nu_E(\mathfrak m)},
\qquad
\frac{\ord_\gamma(r)}{\ord_\gamma(f)}=\frac{\nu_E(\mathfrak m)}{m_E}.
\]
These two descriptions involve genuinely different data: the divisorial
ratio $\lambda_E=(k_E+1)/m_E$ is built from the discrepancy $k_E$ and
governs the volumetric exponent (Section~\ref{sec:vol-spectrum}), whereas
the pointwise scaling exponent $m_E/\nu_E(\mathfrak m)$ is built from the
order $\nu_E(\mathfrak m)$ of the maximal ideal and governs the decay of
$|f|$ relative to the ambient radius. They coincide precisely when
$\nu_E(\mathfrak m)=k_E+1$, which does not hold in general. The metric
content of $\lambda_E$ is therefore carried by the volume function
$V(\varepsilon)$, not by the arcwise comparison of $|f|$ with $r$.
\end{remark}
\section{Volumetric Asymptotics and Divisorial Spectrum}
\label{sec:vol-spectrum}

We retain the notation introduced in Section~2. In particular,
$\pi : Y \to \C^n$ denotes a log resolution of $I$ with exceptional
divisors $\{E_i\}$ and associated numerical data $(k_i, m_i)$.
The sublevel-set volume is denoted by $V(\varepsilon)$, and
$\lambda_1 = \rlct(I) = \min \Lambda(I)$.

The analytic behaviour of the local zeta function reflects
the divisorial structure of the singularity. In particular, its poles
are controlled by the numerical data of a log resolution; see
Varchenko~\cite{Varchenko76}, Igusa~\cite{Igusa00}, and
Denef--Loeser~\cite{DenefLoeser92}.

We now make this correspondence precise.

\begin{definition}[Extended candidate spectrum]
\label{def:spectrum-extended}
For a fixed log resolution $\pi:Y\to X$ of $I$, the \emph{extended
candidate spectrum} is
\[
\widetilde\Lambda_\pi(I)
\;:=\;
\bigcup_{E_i\subset D_\pi}
\left\{\frac{k_i+1+j}{m_i}\ :\ j=0,1,2,\dots\right\}
\;\supseteq\;
\Lambda_\pi(I),
\]
i.e.\ the ratios of Definition~\ref{def:spectrum} together with all their
positive integer translates $j/m_i$ along each divisor. This set is
\emph{locally finite} (finite intersection with any bounded interval), but
in general infinite.
\end{definition}

\begin{proposition}[Divisorial pole identification]
\label{prop:poles}
The local zeta function $Z_I(s)$ admits a meromorphic continuation to
$\C$, and for every log resolution $\pi$,
\[
\operatorname{Poles}(Z_I) \;\subseteq\; \widetilde\Lambda_\pi(I).
\]
Moreover the order of the pole at any $\lambda\in\widetilde\Lambda_\pi(I)$ is
bounded above by the number of exceptional divisors of $\pi$ that meet
simultaneously at a common point of a resolution chart.
\end{proposition}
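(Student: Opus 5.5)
The plan is the classical Atiyah--Bernstein--Igusa argument: pull $Z_I(s)$ back to a log resolution, localize, and reduce to products of one-variable monomial integrals whose continuation can be written down explicitly. Fix a log resolution $\pi:Y\to\C^n$. Since $\pi$ is proper, $K:=\pi^{-1}(\overline{B_\delta})$ is compact.

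I would first cover $K$ by finitely many charts $U_\alpha$ with coordinates $y=(y_1,\dots,y_n)$. In each chart, as recalled in Section~\ref{sec:prelim},
\[
f\circ\pi=u(y)\prod_{i} y_i^{m_i},
\qquad
\det\nolimits_{\C}(d\pi)=v(y)\prod_i y_i^{k_i},
\]
with $u,v$ units. The product runs only over the components of $D_\pi$ through the chart, including strict-transform components, for which $k_i=0$. The real Jacobian of $\pi$ is $|\det_\C d\pi|^2$. Choosing a smooth partition of unity $\{\rho_\alpha\}$ subordinate to the cover, I obtain, for $0<\Re s<\lct(I)$,
\[
Z_I(s)=\sum_\alpha\int_{U_\alpha}\rho_\alpha(y)\,\mathbf 1_{B_\delta}(\pi(y))\,|u|^{-2s}|v|^2\prod_i|y_i|^{2k_i-2sm_i}\,d\mathrm{Vol}(y).
\]

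The cutoff $\mathbf 1_{B_\delta}\circ\pi$ is not smooth, and this is the main technical obstacle. I would handle it in the standard way. Replace the sharp ball by a smooth cutoff $\chi$ equal to $1$ near the origin; the difference between the two integrals is an integral over a region where $|f|$ is bounded below (after shrinking $\delta$ so that $f$ vanishes on $B_\delta$ only near $V(f)$). That difference is therefore entire in $s$ if $0$ is an isolated critical value, which holds for small $\delta$. Alternatively, one can simply define the zeta function with a smooth cutoff from the outset, as in \cite{Igusa00}. Either way the pole set is unchanged, and each local integrand becomes $\varphi_s(y)\prod_i|y_i|^{2k_i-2sm_i}$, where $\varphi_s$ is smooth, compactly supported, and entire in $s$.

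It then suffices to continue a model integral of the form
\[
\int_{\C^r\times\C^{n-r}}\varphi_s(y)\prod_{i=1}^r|y_i|^{2(k_i-sm_i)}\,d\mathrm{Vol}.
\]
I would Taylor-expand $\varphi_s$ in $(y_i,\bar y_i)$ to high order and pass to polar coordinates $y_i=r_ie^{i\theta_i}$. Integrating over $\theta_i$ kills every monomial $y_i^a\bar y_i^b$ with $a\neq b$. The surviving terms produce one-variable factors
\[
\int_0^{c}r^{2(k_i-sm_i)+2a+1}\,dr=\frac{c^{2(k_i+1+a-sm_i)}}{2(k_i+1+a-sm_i)},
\]
while the Taylor remainder is holomorphic on a strip $\Re s<(k_i+1+N)/m_i$ that grows with the expansion order $N$. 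Letting $N\to\infty$ yields a meromorphic continuation to all of $\C$, and the only possible poles are at
\[
s=\frac{k_i+1+j}{m_i},\qquad j\ge 0,
\]
which is exactly $\widetilde\Lambda_\pi(I)$.

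For the order bound: in a given chart, a pole at $\lambda$ can arise only as a product of simple poles from those factors $i$ with $\lambda\in(k_i+1+\mathbb Z_{\ge0})/m_i$. Hence its order is at most the number of components of $D_\pi$ passing through a common point of the chart. Summing finitely many charts does not raise the order. I would state the bound with "divisors" read as components of $D_\pi$, which matches the statement whenever strict-transform components are counted, and note this convention explicitly.
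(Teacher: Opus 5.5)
Apart from one step, your argument is the standard desingularization proof of Igusa's Theorem~5.4.1, which is all the paper's proof invokes (with $(N_E,n_E)=(m_i,k_i+1)$ and the nerve-complex bound on pole orders). Your reading of the order bound as counting all components of $D_\pi$ matches the paper's nerve of $\Exc(\pi)\cup\Supp(\pi^{-1}I)$. It is also forced, since $f=xy$ with $\pi=\mathrm{id}$ has a double pole at $1$ but no exceptional divisors.

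The step that fails is the passage from the sharp ball to a smooth cutoff. The integral $\int(\mathbf{1}_{B_\delta}-\chi)\,|f|^{-2s}\,d\mathrm{Vol}$ is \emph{not} taken over a region where $|f|$ is bounded below. For $n\ge2$ the germ $V(f)$ meets every small sphere, so it meets the shell where $\chi\neq\mathbf{1}_{B_\delta}$, and $|f|^{-2s}$ produces poles there, already at smooth points of $V(f)$. That $0$ is an isolated critical value only says the fibres $f=c\neq0$ are smooth; it says nothing about divergence along $f=0$. Concretely, take $f=x_1$ on $\C^2$ and a smooth $0\le\chi\le1$ compactly supported in $B_\delta$. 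The difference is $\int_{\C}|x_1|^{-2s}g(x_1)\,d\mathrm{Vol}(x_1)$ with $g(x_1)=\int_{\C}(\mathbf{1}_{B_\delta}-\chi)(x_1,x_2)\,d\mathrm{Vol}(x_2)$. Since $g(0)=\pi\delta^2-\int_{\C}\chi(0,x_2)\,d\mathrm{Vol}(x_2)>0$, it has a pole at $s=1$ with residue $-\pi g(0)\neq0$.

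Redefining $Z_I$ with a smooth cutoff ``from the outset'' proves the statement for a different function. The paper's $Z_I$ is the sharp-ball integral, tied to $V(\varepsilon)$ by $Z_I(s)=2s\int_0^\infty t^{-2s-1}V(t)\,dt$, and your claim that the pole set is unchanged is left unsupported. (The paper's proof is silent on this point too, since Igusa's theorem is formulated for Schwartz--Bruhat amplitudes.)

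The containment does not need the difference to be entire. It needs the sharp boundary to create no poles outside $\widetilde\Lambda_\pi(I)$, and that requires a transversality input. By the curve selection lemma, as in Milnor's conic structure theorem, for all sufficiently small $\delta$ the restriction of $\|\pi\|^2$ to each stratum of $D_\pi$ has no critical points on $\pi^{-1}(\partial B_\delta)$. Take a point of $\pi^{-1}(\partial B_\delta)$ lying on exactly the components $\{y_1=0\},\dots,\{y_k=0\}$ of $D_\pi$. Near it, the function $t=\|\pi(y)\|^2-\delta^2$, together with the real and imaginary parts of $y_1,\dots,y_k$, extends to real-analytic coordinates $(y_1,\dots,y_k,t,w)$.

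Integrating out $t$ over $t<0$ leaves a smooth, compactly supported amplitude in $(y_1,\dots,y_k,w)$, entire in $s$, multiplying the unchanged factor $\prod_{i\le k}|y_i|^{2(k_i-sm_i)}$. Your Taylor/polar computation then applies verbatim and gives the same candidate poles and the same order bound. Charts avoiding $\pi^{-1}(\partial B_\delta)$ need no cutoff. Points of $V(f)$ away from the origin are covered because $\pi$ is a log resolution over a neighbourhood of $\overline{B_\delta}$.
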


\begin{proof}
This is Igusa's theorem on the poles of the archimedean complex power via
desingularization~\cite[Thm.~5.4.1]{Igusa00}, applied to a log resolution
$\pi$ of $(X,I)$ with numerical data $(N_E,n_E)=(m_i,k_i+1)$ in Igusa's
notation. For $K=\C$ and the untwisted quasicharacter (the case relevant
here, with $|p|=0$ in Igusa's statement), the poles of the complex power lie
in $-\tfrac{1}{N_E}(n_E+\N)$ for each $E$, in a variable related to our $s$
by a sign flip; translating this to our convention gives exactly the stated
containment in $\widetilde\Lambda_\pi(I)$.
The multiplicity bound is Igusa's statement that the order of any pole is at
most the dimension of the nerve complex of the normal-crossing
configuration $\Exc(\pi)\cup\operatorname{Supp}(\pi^{-1}I)$, increased by
$1$: a $p$-simplex of the nerve complex corresponds to $p+1$ divisors with
nonempty common intersection, so this is precisely the number of divisors
meeting at a point.
\end{proof}

Proposition~\ref{prop:poles} requires more than the leading divisorial ratios
$(k_i+1)/m_i$. The Taylor expansion of the smooth unit factor in the local
resolution integral
\[
\int_U\prod_{i\in I_U}|y_i|^{2(k_i-sm_i)}\Phi(y)\,dy
\]
produces the shifted candidates
\[
\frac{k_i+1+j}{m_i},\qquad j\ge1,
\]
which together with the leading ratios form the extended candidate spectrum
$\widetilde\Lambda_\pi(I)$. We are grateful to Guillem Blanco for pointing
out this phenomenon and for the example
$f=x^5+y^7+x^3y^3$, whose pullback
\[
f\circ\pi=\sigma^{35}\bigl[(1+t^7)+\sigma t^3\bigr]
\]
exhibits the additional candidate $13/35$ beyond the leading ratio $12/35$.

\begin{remark}[Candidate versus actual poles]
\label{rem:candidate-vs-actual}
The extended spectrum depends only on the resolution data, whereas the
visible spectrum
\[
\Lambda(I)\subseteq\widetilde\Lambda_\pi(I)
\]
consists of those candidates whose transversal contributions survive after
summation over all charts. Since cancellations are common
\cite[\S2.3]{Denef91}, $\widetilde\Lambda_\pi(I)$ is typically much larger
than $\Lambda(I)$; Example~\ref{ex:newton} illustrates this phenomenon.
\end{remark}

\begin{remark}[When the candidate spectrum is complete]
\label{rem:when-complete}
In favorable situations, however, no shifted candidates occur. This is the
case, for instance, for toric resolutions of Newton--non-degenerate
singularities whose monomials all lie on the boundary of the Newton
polyhedron, where
\[
\widetilde\Lambda_\pi(I)=\Lambda_\pi(I).
\]
Then the spectrum is finite, the multiplicity bound of
Proposition~\ref{prop:poles} reduces to the sharp chart-by-chart count
$m_\lambda=\#\{i:\lambda_i=\lambda\}$, and
Examples~\ref{ex:bp}, \ref{ex:fermat}, and \ref{ex:log} apply.
\end{remark}

Finally, the uniqueness of asymptotic expansions implies that the recursive
procedure below recovers the visible spectrum $\Lambda(I)$, together with
its multiplicities and coefficients, uniquely from the volume function
$V(\varepsilon)$. The whole reconstruction rests on a single elementary
comparison between the basis functions
$\varepsilon^{2\lambda}|\log\varepsilon|^{m-1}$ appearing in the expansion,
which we isolate as a lemma.

\begin{lemma}[Dominance]
\label{lem:dominance}
Let $\lambda_1<\lambda_2$ and $m_1,m_2\ge1$. Then
\[
\varepsilon^{2\lambda_2}|\log\varepsilon|^{m_2-1}
=
o\bigl(\varepsilon^{2\lambda_1}|\log\varepsilon|^{m_1-1}\bigr),
\qquad
\varepsilon\to0^+.
\]
\end{lemma}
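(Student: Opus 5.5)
The plan is to compute the ratio of the two basis functions directly and show that it tends to $0$. For $0<\varepsilon<1$ set $L:=|\log\varepsilon|=-\log\varepsilon>0$, so that $L\to+\infty$ as $\varepsilon\to0^+$, and $\delta:=\lambda_2-\lambda_1>0$. Then
\[
\frac{\varepsilon^{2\lambda_2}|\log\varepsilon|^{m_2-1}}{\varepsilon^{2\lambda_1}|\log\varepsilon|^{m_1-1}}
=
\varepsilon^{2\delta}\,L^{\,m_2-m_1}
=
e^{-2\delta L}\,L^{\,m_2-m_1}.
\]
This rewriting turns the whole statement into a one-variable limit in $L$.

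We split according to the sign of $p:=m_2-m_1\in\mathbb Z$.
\begin{itemize}
\item If $p\le0$, then $L^{p}\le1$ for $L\ge1$, so the ratio is at most $e^{-2\delta L}\to0$.
\item If $p>0$, we use the elementary bound $e^{x}\ge x^{p+1}/(p+1)!$ for $x\ge0$, with $x=2\delta L$. This gives
\[
e^{-2\delta L}L^{p}\le\frac{(p+1)!}{(2\delta)^{p+1}}\,L^{-1}\to0 .
\]
\end{itemize}
In both cases the quotient tends to $0$, which is exactly the little-$o$ relation claimed in Lemma~\ref{lem:dominance}.

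There is no real obstacle here. The only point needing care is that the exponential gap must beat an arbitrary polynomial power of the logarithm. This is why we use the strict inequality $\lambda_1<\lambda_2$ through $\delta>0$, while the multiplicities $m_1,m_2$ may be arbitrary. We would also remark that for $\lambda_1=\lambda_2$ the same computation gives dominance exactly when $m_2<m_1$. This ordering of the basis functions, lexicographic in increasing $\lambda$ and then decreasing $m$, is what the subsequent recursive reconstruction will use.
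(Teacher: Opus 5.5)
Your proof is correct and follows the same route as the paper: both reduce the claim to showing that the ratio $\varepsilon^{2(\lambda_2-\lambda_1)}|\log\varepsilon|^{m_2-m_1}$ tends to $0$. Your explicit bound $e^{x}\ge x^{p+1}/(p+1)!$ makes precise the paper's one-line appeal to ``any power of $|\log\varepsilon|$ is dominated by a positive power of $\varepsilon^{-1}$,'' and your side remark (for $\lambda_1=\lambda_2$, dominance holds exactly when $m_2<m_1$) describes the ordering more accurately than the paper's phrase ``lexicographic order on $(\lambda,m)$.''
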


\begin{proof}
The ratio of the two terms equals
$\varepsilon^{2(\lambda_2-\lambda_1)}|\log\varepsilon|^{m_2-m_1}$,
which tends to $0$ as $\varepsilon\to0^+$ since $\lambda_2>\lambda_1$ and any
power of $|\log\varepsilon|$ is dominated by a positive power of
$\varepsilon^{-1}$ as $\varepsilon\to0^+$.
\end{proof}

\begin{lemma}
\label{lem:logpower}
Let $L(\varepsilon)=|\log\varepsilon|$. Then, as $\varepsilon\to0^+$,
\[
L(\varepsilon)^\alpha
\longrightarrow
\begin{cases}
0, & \alpha<0,\\
1, & \alpha=0,\\
+\infty, & \alpha>0.
\end{cases}
\]
\end{lemma}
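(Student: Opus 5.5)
The statement to prove is Lemma~\ref{lem:logpower}: the limit of $L(\varepsilon)^\alpha$ as $\varepsilon\to0^+$, where $L(\varepsilon)=|\log\varepsilon|$. The plan is to reduce everything to the single fact that $L(\varepsilon)\to+\infty$, and then compose with the power map $u\mapsto u^\alpha$ on $(0,\infty)$.

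\textbf{Step 1.} For $0<\varepsilon<1$ we have $\log\varepsilon<0$, hence $L(\varepsilon)=-\log\varepsilon=\log(1/\varepsilon)$. Since $1/\varepsilon\to+\infty$ and $\log$ is increasing and unbounded, $L(\varepsilon)\to+\infty$. Concretely, given $M>0$, we have $L(\varepsilon)>M$ as soon as $\varepsilon<e^{-M}$. In particular $L(\varepsilon)>0$ on $(0,1)$, so $L(\varepsilon)^\alpha=\exp\bigl(\alpha\log L(\varepsilon)\bigr)$ is well defined for every real $\alpha$.

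\textbf{Step 2.} We split into the three cases.

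\emph{Case $\alpha=0$.} Here $L(\varepsilon)^0=1$ identically on $(0,1)$, so the limit is $1$.

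\emph{Case $\alpha>0$.} The map $u\mapsto u^\alpha$ is increasing and unbounded on $(0,\infty)$. Given $M>0$, we have $L(\varepsilon)^\alpha>M$ whenever $L(\varepsilon)>M^{1/\alpha}$, that is, whenever $\varepsilon<\exp(-M^{1/\alpha})$. Hence $L(\varepsilon)^\alpha\to+\infty$.

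\emph{Case $\alpha<0$.} Write $L(\varepsilon)^\alpha=1/L(\varepsilon)^{|\alpha|}$. By the previous case the denominator tends to $+\infty$, so the quotient tends to $0$. Explicitly, for a given $\eta>0$ we have $L(\varepsilon)^\alpha<\eta$ once $\varepsilon<\exp(-\eta^{-1/|\alpha|})$.

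There is no genuine obstacle in this argument. The only point needing care is to restrict to $0<\varepsilon<1$, so that $L(\varepsilon)>0$ and real powers of $L(\varepsilon)$ are defined. This matches the use of the lemma alongside Lemma~\ref{lem:dominance}, where the exponent $\alpha=m_2-m_1$ is an integer of either sign.
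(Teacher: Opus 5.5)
Your proof is correct and is essentially the paper's argument, which simply states that the lemma is immediate from $L(\varepsilon)\to+\infty$; you make the thresholds explicit, which is fine. One small aside: the paper uses this lemma in the multiplicity-recovery step of Theorem~\ref{thm:volume-reconstruction}, with $\alpha=m_{\lambda_k}-p$, whereas Lemma~\ref{lem:dominance} relies on the stronger fact that $\varepsilon^{c}|\log\varepsilon|^{\beta}\to0$ for $c>0$.
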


\begin{proof}
Immediate from $L(\varepsilon)\to+\infty$ as $\varepsilon\to0^+$.
\end{proof}

The reconstruction procedure below relies only on
Lemma~\ref{lem:dominance}: the asymptotic scales
$\varepsilon^{2\lambda}|\log\varepsilon|^{m-1}$ are totally ordered by the
lexicographic order on $(\lambda,m)$, so the term of lowest order
dominates at every stage of the recursion. In this sense
Theorem~\ref{thm:volume-reconstruction} is a uniqueness theorem for
logarithmic asymptotic expansions, applied to the particular expansion
produced by the volume function.

\begin{theorem}[Reconstruction of the visible spectrum from volume asymptotics]
\label{thm:volume-reconstruction}

Let $I=(f)\subset\mathcal O_n$ be a principal analytic ideal, and define
\[
V(\varepsilon)
=
\Vol\{x\in B_\delta:\ |f(x)|<\varepsilon\}.
\]

Assume that $V(\varepsilon)$ admits the classical asymptotic expansion
arising from the theory of local zeta functions and resolution of
singularities (see
\cite{Varchenko76,Igusa00,DenefLoeser92}),
namely
\[
V(\varepsilon)
\sim
\sum_{\lambda\in\Lambda(I)}
c_\lambda\,
\varepsilon^{2\lambda}
|\log\varepsilon|^{m_\lambda-1},
\qquad
\varepsilon\to0^+,
\]
where
\[
\Lambda(I)
=
\{\lambda_1<\lambda_2<\cdots\}
\subset
\widetilde\Lambda_\pi(I)
\]
is the visible spectrum,
$m_\lambda$ denotes the order of the pole of the local zeta function
$Z_I(s)$ at $\lambda$, and the expansion is valid to arbitrary order:
by the classical asymptotic expansion theorem for local zeta functions
(see Varchenko~\cite{Varchenko76}, Igusa~\cite{Igusa00}, and
Denef--Loeser~\cite{DenefLoeser92}), the expansion is asymptotic in the
Poincar\'e sense to arbitrary finite order, i.e.\ for every $N$ the
difference between $V(\varepsilon)$ and the partial sum over
$\lambda\le N$ is $o\bigl(\varepsilon^{2N}|\log\varepsilon|^{m_N}\bigr)$
as $\varepsilon\to0^+$.

Then:

\begin{enumerate}[label=\textnormal{(\roman*)}]

\item
\textup{Recovery of exponents.}
The leading exponent is recovered by
\[
\lambda_1
=
\frac12
\lim_{\varepsilon\to0^+}
\frac{\log V(\varepsilon)}
{\log\varepsilon},
\]
and, recursively,
\[
\lambda_k
=
\frac12
\lim_{\varepsilon\to0^+}
\frac{\log|R_k(\varepsilon)|}
{\log\varepsilon},
\]
where
\[
R_k(\varepsilon)
=
V(\varepsilon)
-
\sum_{j<k}
c_{\lambda_j}
\varepsilon^{2\lambda_j}
|\log\varepsilon|^{m_{\lambda_j}-1}.
\]

\item
\textup{Recovery of multiplicities.}
For each recovered exponent $\lambda_k$, the multiplicity
$m_{\lambda_k}$ is the unique integer $p\ge1$ for which
\[
\frac{
R_k(\varepsilon)
}{
\varepsilon^{2\lambda_k}
|\log\varepsilon|^{p-1}
}
\]
converges to a finite nonzero limit.

\item
\textup{Recovery of coefficients.}
The coefficient $c_{\lambda_k}$ is recovered simultaneously as this
finite nonzero limit,
\[
\frac{
R_k(\varepsilon)
}{
\varepsilon^{2\lambda_k}
|\log\varepsilon|^{m_{\lambda_k}-1}
}
\longrightarrow
c_{\lambda_k}
\in\mathbb R\setminus\{0\}.
\]

\end{enumerate}

Consequently, the visible spectrum $\{\lambda:c_\lambda\neq0\}$, together
with its multiplicities and coefficients, is uniquely determined by the
asymptotic expansion of $V(\varepsilon)$. Therefore the complete visible
spectral data
\[
\bigl(
\{\lambda:c_\lambda\neq0\},
(m_\lambda),
(c_\lambda)
\bigr)
\]
is uniquely determined by the asymptotic expansion of
$V(\varepsilon)$.

\end{theorem}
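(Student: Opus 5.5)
We argue by induction on $k$, treating the assumed Poincaré-type expansion as the only input; no further geometric information about $f$ is used. The basis functions $\varphi_{\lambda,m}(\varepsilon):=\varepsilon^{2\lambda}|\log\varepsilon|^{m-1}$ are totally ordered by Lemma~\ref{lem:dominance}. Lower $\lambda$ always dominates, and for equal $\lambda$ a higher log power dominates, by Lemma~\ref{lem:logpower}. The first step is to record the key consequence of the hypothesis. For each $k$, since $\Lambda(I)$ is locally finite, the remainder $R_k$ equals $c_{\lambda_k}\varphi_{\lambda_k,m_{\lambda_k}}$ plus finitely many terms $c_\lambda\varphi_{\lambda,m_\lambda}$ with $\lambda_k<\lambda\le N$, plus an error $o(\varphi_{N,m_N+1})$. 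Choosing $N$ with $N>\lambda_k$, and applying Lemma~\ref{lem:dominance} to every term except the first, gives
\[
R_k(\varepsilon)=c_{\lambda_k}\varepsilon^{2\lambda_k}|\log\varepsilon|^{m_{\lambda_k}-1}\bigl(1+o(1)\bigr),\qquad c_{\lambda_k}\neq0 .
\]
This is the single estimate from which (i)--(iii) follow.

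For (i), take $\log|\cdot|$ of this estimate and divide by $\log\varepsilon$, which tends to $-\infty$:
\[
\frac{\log|R_k|}{\log\varepsilon}=2\lambda_k+\frac{(m_{\lambda_k}-1)\log|\log\varepsilon|+\log|c_{\lambda_k}|+o(1)}{\log\varepsilon}\longrightarrow 2\lambda_k .
\]
The case $k=1$ is the same computation with $R_1=V$. Note that $V>0$ near $0$ because $f(0)=0$, so $\log V$ is defined.

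For (ii) and (iii), divide $R_k$ by $\varphi_{\lambda_k,p}$. The ratio equals $c_{\lambda_k}|\log\varepsilon|^{m_{\lambda_k}-p}(1+o(1))$. By Lemma~\ref{lem:logpower} it tends to $\pm\infty$ if $p<m_{\lambda_k}$, to $0$ if $p>m_{\lambda_k}$, and to $c_{\lambda_k}$ if $p=m_{\lambda_k}$. This proves that $m_{\lambda_k}$ is the unique $p$ giving a finite nonzero limit, and that the limit is $c_{\lambda_k}$. The coefficient is real because $V$ is real.

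The induction then closes. The quantities $\lambda_1,\dots,\lambda_{k-1}$, their multiplicities and their coefficients are determined by $V$ by the inductive hypothesis, so $R_k$ is determined by $V$, and hence so are $\lambda_k$, $m_{\lambda_k}$ and $c_{\lambda_k}$. Since the recursion exhausts the locally finite set $\{\lambda:c_\lambda\ne0\}$ in increasing order, the full visible spectral data is determined. This gives the uniqueness statement.

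The main obstacle is the first step, namely controlling the error term uniformly. One must check that the Poincaré remainder $o(\varepsilon^{2N}|\log\varepsilon|^{m_N})$, with $N$ chosen strictly above $\lambda_k$, is indeed $o(\varphi_{\lambda_k,m_{\lambda_k}})$. This needs only $N>\lambda_k$ and Lemma~\ref{lem:dominance}, with the log power absorbed. One must also handle the degenerate case where $R_k$ vanishes at some $\varepsilon$. The estimate $R_k\sim c_{\lambda_k}\varphi$ with $c_{\lambda_k}\ne0$ shows that $R_k$ is nonzero for all small $\varepsilon$, so the logarithm in (i) makes sense. The identification of the poles of $Z_I$ with the exponents carrying $c_\lambda\ne0$ is taken as given by the hypothesis and Definition~\ref{def:visible-spectrum}.
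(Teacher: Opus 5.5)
Your proposal is correct and follows the paper's proof: induction on $k$, the single estimate $R_k(\varepsilon)=c_{\lambda_k}\varepsilon^{2\lambda_k}|\log\varepsilon|^{m_{\lambda_k}-1}(1+o(1))$, then logarithms for (i) and division by $\varepsilon^{2\lambda_k}|\log\varepsilon|^{p-1}$ with Lemma~\ref{lem:logpower} for (ii)--(iii). Your explicit choice of a truncation level $N>\lambda_k$, combined with the local finiteness of $\Lambda(I)$ and Lemma~\ref{lem:dominance}, spells out the step the paper compresses into ``the asymptotic expansion gives''.
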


\begin{proof}

The existence of the asymptotic expansion is classical. It follows from
resolution of singularities, the meromorphic continuation of the local
zeta function, and Mellin inversion
(see \cite{Varchenko76,Igusa00,DenefLoeser92}).
It therefore remains only to prove the reconstruction procedure.

\medskip
\noindent
\textbf{Leading term.}

By Lemma~\ref{lem:dominance}, since
\[
\lambda_1<\lambda_2<\cdots,
\]
the first term of the asymptotic expansion dominates all the others.
Hence
\[
V(\varepsilon)
=
c_{\lambda_1}
\varepsilon^{2\lambda_1}
|\log\varepsilon|^{m_{\lambda_1}-1}
\bigl(1+\eta(\varepsilon)\bigr),
\]
where
\[
\eta(\varepsilon)\longrightarrow0.
\]

Taking logarithms,

\[
\log V(\varepsilon)
=
2\lambda_1\log\varepsilon
+
(m_{\lambda_1}-1)\log|\log\varepsilon|
+
O(1),
\]

since $\log c_{\lambda_1}$ is constant and
$\log(1+\eta)=o(1)$.

Because

\[
\frac{\log|\log\varepsilon|}
{\log\varepsilon}
\longrightarrow0,
\]

we obtain

\[
\lambda_1
=
\frac12
\lim_{\varepsilon\to0^+}
\frac{\log V(\varepsilon)}
{\log\varepsilon}.
\]

\medskip
\noindent
\textbf{Induction.}

Suppose that

\[
\lambda_1,\ldots,\lambda_{k-1},
\qquad
m_{\lambda_j},
\qquad
c_{\lambda_j}
\]

have already been recovered.

Subtract their contributions and define

\[
R_k(\varepsilon)
=
V(\varepsilon)
-
\sum_{j<k}
c_{\lambda_j}
\varepsilon^{2\lambda_j}
|\log\varepsilon|^{m_{\lambda_j}-1}.
\]

The asymptotic expansion gives

\[
R_k(\varepsilon)
=
c_{\lambda_k}
\varepsilon^{2\lambda_k}
|\log\varepsilon|^{m_{\lambda_k}-1}
(1+o(1)).
\]

Since $1+o(1)\to1$, there exists $\varepsilon_0>0$ such that
$|o(1)|<1/2$ for all $\varepsilon<\varepsilon_0$, and hence
$1+o(1)>1/2$ for such $\varepsilon$. As $c_{\lambda_k}\neq0$, it follows
that $R_k(\varepsilon)$ has the same sign as $c_{\lambda_k}$ for all
sufficiently small $\varepsilon$. Therefore

\[
\log|R_k(\varepsilon)|
=
2\lambda_k\log\varepsilon
+
(m_{\lambda_k}-1)\log|\log\varepsilon|
+
O(1),
\]

and the same argument as in Step~1 yields

\[
\lambda_k
=
\frac12
\lim_{\varepsilon\to0^+}
\frac{\log|R_k(\varepsilon)|}
{\log\varepsilon}.
\]

Thus every visible exponent is recovered recursively.

\medskip
\noindent
\textbf{Recovery of multiplicity.}

Fix one recovered exponent $\lambda_k$ and define

\[
Q_{k,p}(\varepsilon)
=
R_k(\varepsilon)
\varepsilon^{-2\lambda_k}
|\log\varepsilon|^{-(p-1)},
\qquad
p\ge1.
\]

Then

\[
Q_{k,p}(\varepsilon)
=
c_{\lambda_k}
|\log\varepsilon|^{m_{\lambda_k}-p}
(1+o(1)).
\]

By Lemma~\ref{lem:logpower}, applied with $\alpha=m_{\lambda_k}-p$,

\[
\begin{cases}
|Q_{k,p}(\varepsilon)|\to\infty,
&
p<m_{\lambda_k},
\\[1ex]
Q_{k,p}(\varepsilon)\to0,
&
p>m_{\lambda_k},
\\[1ex]
Q_{k,p}(\varepsilon)\to c_{\lambda_k}\neq0,
&
p=m_{\lambda_k}.
\end{cases}
\]

Hence $m_{\lambda_k}$ is uniquely characterized as the only integer for
which the normalized remainder $Q_{k,p}(\varepsilon)$ converges to a
finite nonzero limit.

\medskip
\noindent
\textbf{Recovery of coefficient.}

Once $m_{\lambda_k}$ is identified as above, the same limit is precisely
the coefficient:
\[
Q_{k,m_{\lambda_k}}(\varepsilon)\longrightarrow c_{\lambda_k}.
\]

Applying this procedure inductively reconstructs every visible exponent,
its multiplicity, and its coefficient. Consequently

\[
\bigl(
\{\lambda:c_\lambda\neq0\},
(m_\lambda),
(c_\lambda)
\bigr)
\]

is uniquely determined by the asymptotic expansion of
$V(\varepsilon)$.

\end{proof}

Under the classical theory of local zeta functions
(\cite{Atiyah70,Varchenko76,Igusa00,DenefLoeser92}),
the coefficients $c_\lambda$ are precisely the leading Laurent
coefficients of the poles of $Z_I(s)$. Equivalently, they are obtained
by the standard residue construction after resolution of singularities
and therefore admit resolution-theoretic descriptions as sums of local
residue contributions over the relevant strata of a log resolution.

\begin{proposition}[Basic properties of the divisorial spectra]
\label{prop:resolution-independence}
Let $\pi,\pi_1,\pi_2:Y\to\mathbb{C}^n$ be log resolutions of $I$. Then:
\begin{enumerate}[label=\textnormal{(\roman*)}]
\item \emph{(Finiteness.)} Each resolution-dependent spectrum
$\Lambda_\pi(I)$ is finite.

\item \emph{(Valuative invariance.)} For a prime divisor $E$ over
$\mathbb{C}^n$, the quantities $\nu_E(I)$, $a(E)$ and
$\lambda_E=(a(E)+1)/\nu_E(I)$ depend only on the divisorial valuation
$\ord_E$, not on the model in which $E$ appears.

\item \emph{(Invariance of the minimum.)}
$\min\Lambda_\pi(I)=\rlct(I)$, independently of $\pi$.

\item \emph{(Invariance of the visible spectrum.)} The visible spectrum
$\Lambda(I)$ of Definition~\ref{def:visible-spectrum} is a birational
invariant, and $\Lambda(I)\subseteq\widetilde\Lambda_\pi(I)$ for every $\pi$
(Proposition~\ref{prop:poles}).

\item \emph{(Local finiteness, not finiteness, of $\Lambda(I)$.)}
$\Lambda(I)$ is locally finite, being contained in the locally finite set
$\widetilde\Lambda_\pi(I)$; it is not asserted to be finite (Remark~\ref{rem:candidate-vs-actual}).
\end{enumerate}
By contrast, the resolution-dependent spectrum $\Lambda_\pi(I)$ is
\emph{not} a birational invariant: in general
$\Lambda_{\pi_1}(I)\neq\Lambda_{\pi_2}(I)$
(Remark~\ref{rem:not-invariant}).
\end{proposition}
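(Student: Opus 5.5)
I will treat the five items separately, since each rests on a different earlier input.

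\emph{Item (i).} This is immediate. A log resolution $\pi$ of a germ is proper, and we work over a compact neighbourhood of the origin, so the simple normal crossings configuration $D_\pi$ has only finitely many prime components meeting $\pi^{-1}(\overline{B_\delta})$. Hence $\Lambda_\pi(I)$ is a finite set of rationals.

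\emph{Item (ii).} Here the point is that each quantity is computed by the valuation $\ord_E$ alone.
\begin{itemize}
\item $\nu_E(I)=\min_{h\in I}\ord_E(h)$ is manifestly a function of $\ord_E$.
\item For $a(E)$, I will compare two models $Y_1,Y_2$ in which $E$ appears. Dominate both by a common resolution $Y_3$, in which the strict transform of $E$ is again a divisor with the same valuation.
\item Near a general point of $E$, each map $Y_3\to Y_i$ is an isomorphism. The coefficient of $E$ in $K_{Y/X}$ is the order along $E$ of the Jacobian of $\pi$ at a general point of $E$, so it agrees on $Y_1$, $Y_2$ and $Y_3$.
\item Alternatively, the arc-space formula of Theorem~\ref{thm:arc-spectrum}(ii), $\codim\Cont^m(E)=m(k_E+1)$, expresses $a(E)$ through cylinders in $\mathcal L(X)$. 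These are determined by $\ord_E$ (cf.\ \cite[Theorem~C]{EinLazMust04}), which gives the invariance intrinsically.
\end{itemize}
The ratio $\lambda_E$ then inherits the invariance.

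\emph{Item (iii).} Pull back $\int|f|^{-2\lambda}$ to $Y$ and work chart by chart. The integrand becomes $\prod|y_i|^{2(k_i-\lambda m_i)}$ times a unit. This is integrable iff $\lambda<(k_i+1)/m_i$ for every $E_i$ meeting the preimage of a neighbourhood of $0$. So $\min\Lambda_\pi(I)$ is the supremum of local integrability exponents, which is a resolution-free quantity, namely $\rlct(I)$ as defined in the paper. One needs that the components counted in $\Lambda_\pi(I)$ are exactly those meeting $\pi^{-1}(0)$, or at least $\pi^{-1}(B_\delta)$; this is consistent with the convention that everything is taken near the origin.

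\emph{Items (iv) and (v).} Birational invariance holds by definition: $\Lambda(I)$ is the pole set of $Z_I(s)=\int_{B_\delta}|f|^{-2s}\,d\mathrm{Vol}$, which involves no resolution. Theorem~\ref{thm:volume-reconstruction} further shows that it is determined by $V(\varepsilon)$ alone. The containment $\Lambda(I)\subseteq\widetilde\Lambda_\pi(I)$ for every $\pi$ is Proposition~\ref{prop:poles}. For (v):
\begin{itemize}
\item $\widetilde\Lambda_\pi(I)$ is a finite union, by (i), of arithmetic progressions $\{(k_i+1+j)/m_i\}$ with step $1/m_i>0$, so it is locally finite.
\item Any subset of a locally finite set is locally finite.
\item No finiteness is claimed, so there is nothing more to prove.
\end{itemize}

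\emph{Closing claim.} The non-invariance of $\Lambda_\pi(I)$ is shown by an example: blow up a point on an existing divisor. The new divisor $E'$ has data given by sums of the data of the divisors through that point, and generically its ratio $\lambda_{E'}$ is new. This is deferred to Remark~\ref{rem:not-invariant}.

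\emph{Main obstacle.} I expect the only genuinely delicate step to be (ii), namely the model independence of $a(E)$. I would argue it via the common-resolution argument above, using that birational morphisms between smooth varieties are isomorphisms at the generic point of a divisor that is not contracted.
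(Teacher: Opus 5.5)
Your proposal is correct. Items (i), (iv), (v) and the closing claim follow the paper's proof essentially verbatim. In (ii) you use the same common-resolution device as the paper. Where the paper simply cites the literature for the model-independence of $a(F)$, you supply the reason: each $\mu_i\colon Y_3\to Y_i$ is an isomorphism near the generic point of a non-contracted divisor, so $\Jac(\pi_3)=(\Jac(\pi_i)\circ\mu_i)\cdot\Jac(\mu_i)$ has the same order along it. Your arc-space alternative needs more than Theorem~\ref{thm:arc-spectrum}(ii) provides. It would require that the cylinders $\Cont^m(E)$ built on different models have symmetric difference of larger codimension, so keep it only as a heuristic.

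The genuine difference is in (iii). The paper argues valuatively: the resolution formula holds on every log resolution, and a finer resolution only adds divisors of ratio $\ge$ the minimum. This implicitly uses a common refinement and the inequality $a(F)+1\ge\sum_i(k_i+1)\ord_F(E_i)$ for divisors $F$ over an snc model. You instead identify $\min\Lambda_\pi(I)$ directly with the integrability threshold of $|f|^{-2\lambda}$, via the chart-wise monomial computation. This needs neither a comparison of resolutions nor a discrepancy inequality. The price is the localisation convention you rightly flag: only components of $D_\pi$ meeting $\pi^{-1}(0)$ should count. For $\delta$ small these are exactly the components meeting $\pi^{-1}(\overline{B_\delta})$, by properness of $\pi$. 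The paper's route transfers to purely algebraic settings where no measure is available. Yours is more self-contained and makes explicit a convention the paper's proof uses tacitly.
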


\begin{proof}
(i) is immediate, since the simple normal crossings configuration
$D_\pi=\Exc(\pi)\cup\Supp(\pi^{-1}I)$ has finitely many prime components.

\medskip
\noindent\textbf{(ii) Valuative invariance.}
Dominate $\pi_1$ and $\pi_2$ by a common log resolution
$\pi:Y\to\mathbb{C}^n$, with morphisms $\mu_i:Y\to Y_i$.

\begin{center}
\begin{tikzpicture}[scale=1.1]
\node (Y) at (0,2) {$Y$};
\node (Y1) at (-2,0) {$Y_1$};
\node (Y2) at (2,0) {$Y_2$};
\node (X) at (0,-2) {$\mathbb{C}^n$};

\draw[->] (Y) -- node[left] {$\mu_1$} (Y1);
\draw[->] (Y) -- node[right] {$\mu_2$} (Y2);
\draw[->] (Y1) -- node[left] {$\pi_1$} (X);
\draw[->] (Y2) -- node[right] {$\pi_2$} (X);
\draw[->] (Y) -- node[right] {$\pi$} (X);
\end{tikzpicture}
\end{center}

For a prime divisor $F\subset Y$ the valuation $\nu_F=\ord_F$, the value
\[
\nu_F(I):=\min_{h\in I}\nu_F(h),
\qquad
a(F):=\ord_F(K_{Y/\mathbb{C}^n}),
\]
depend only on $\nu_F$ and not on the model
(see \cite[§9]{KollarMori98}, \cite[§9.2]{Lazarsfeld04}, and
de~Fernex--Ein--Ishii~\cite{deFEI08}). If
$E_j^{(i)}\subset Y_i$ has strict transform $F$ in $Y$, then
\[
\frac{k_j^{(i)}+1}{m_j^{(i)}}
=
\frac{a(F)+1}{\nu_F(I)},
\]
which proves (ii). This says that each individual ratio is intrinsic to its
valuation; it does \emph{not} assert that the finite collection of ratios
extracted from a given resolution is independent of the resolution.

\medskip
\noindent\textbf{(iii) Invariance of the minimum.}
The resolution formula $\rlct(I)=\min_i (k_i+1)/m_i$ holds for every log
resolution, and a finer resolution only adds divisors whose ratios are
$\ge$ this minimum. Hence $\min\Lambda_\pi(I)=\rlct(I)$ for every $\pi$.

\medskip
\noindent\textbf{(iv) Invariance of the visible spectrum.}
By Proposition~\ref{prop:poles} the poles of $Z_I(s)$ lie in
$\widetilde\Lambda_\pi(I)$. The local zeta function $Z_I(s)$ — and therefore
its set of poles $\Lambda(I)$ — is an analytic invariant of $I$, independent
of any resolution (see \cite{Varchenko76,Igusa00}). Thus $\Lambda(I)$ is
birationally invariant and $\Lambda(I)\subseteq\widetilde\Lambda_\pi(I)$ for
every $\pi$.

\medskip
\noindent\textbf{(v) Local finiteness.}
Immediate from (iv) together with the local finiteness of
$\widetilde\Lambda_\pi(I)$ (Definition~\ref{def:spectrum-extended}); unlike
$\Lambda_\pi(I)$ itself, finer resolutions of the same $I$, or shifted
candidates along a single divisor, may in principle contribute infinitely
many further terms accumulating only at $+\infty$, so global finiteness of
$\Lambda(I)$ is not claimed.

The negative statement is exhibited in Remark~\ref{rem:not-invariant}.
\end{proof}

\begin{remark}[The resolution-dependent spectrum is not a birational invariant]
\label{rem:not-invariant}
Take $(X,I)=(\mathbb{C}^2,\mathfrak m)$, where $\mathfrak m=(x,y)$ is the
maximal ideal at the origin; here $\rlct(\mathfrak m)=2$. Let $\pi_1$ be the
blow-up of the origin, with single exceptional divisor $E_1$, for which
$a(E_1)=1$ and $\nu_{E_1}(\mathfrak m)=1$, so
\[
\Lambda_{\pi_1}(\mathfrak m)=\{2\}.
\]
Let $\pi_2$ be $\pi_1$ followed by the blow-up of a point $p\in E_1$,
introducing a second exceptional divisor $E_2$. In the local coordinates
$x=s,\ y=s^2t$ one finds $\ord_{E_2}(x)=1$, $\ord_{E_2}(y)=2$, hence
$\nu_{E_2}(\mathfrak m)=1$, while $a(E_2)=2$; therefore $\lambda_{E_2}=3$ and
\[
\Lambda_{\pi_2}(\mathfrak m)=\{2,3\}.
\]
Thus $\Lambda_{\pi_1}(\mathfrak m)\neq\Lambda_{\pi_2}(\mathfrak m)$: the
additional blow-up creates a new divisor carrying the strictly larger ratio
$3$. \emph{The resolution-dependent spectrum is generally not a birational
invariant.} Only the minimum
$\min\Lambda_{\pi}(\mathfrak m)=\rlct(\mathfrak m)=2$ and the visible
spectrum $\Lambda(\mathfrak m)$ — which retains only the value
$2$, the non-visible candidate $3$ being absent — are intrinsic. (In the
normalization $|f|^{-\lambda}$ used in singular learning theory, where the
thresholds are halved, these read $\{1\}$ and $\{1,\tfrac32\}$, with common
minimum $\tfrac12\rlct(\mathfrak m)=1$.)
\end{remark}

\section{Examples}
\label{sec:examples}

The following examples illustrate both directions of the
Volumetric--Divisorial Description. On the one hand, they show how the
divisorial data of a log resolution determines the asymptotic expansion
of the sublevel-set volume. On the other hand, they illustrate how the
visible spectrum can be recovered from that expansion by the
recursive procedure of Theorem~\ref{thm:volume-reconstruction}.

Throughout this section, $I=\langle f\rangle\subset\mathcal O_n$ denotes a
principal analytic ideal generated by a single function $f$, so that
\[
\rlct(f)=\rlct(I)
\]
trivially, with $f$ itself serving as the generator used in the resolution
formula below.

We recall that
\[
\rlct(I)=\min_i\frac{k_i+1}{m_i},
\]
where $k_i$ and $m_i$ denote respectively the discrepancy and the order
of vanishing of $I$ along the exceptional divisor $E_i$ of a fixed log
resolution
\[
\pi:Y\to\mathbb C^n.
\]

Unless otherwise stated, all volumes are computed with respect to the
$2n$-dimensional Lebesgue measure on
\[
\mathbb C^n\simeq\mathbb R^{2n}.
\]

\begin{example}[Brieskorn--Pham singularities]
\label{ex:bp}
Let $f(x,y)=x^a+y^b$ with integers $a,b\ge 2$, viewed as an
element of $\mathcal{O}_2=\mathcal{O}_{\mathbb{C}^2,0}$; the function is
non-degenerate with respect to its Newton polygon, since the face
polynomial $x^a+y^b$ has no critical points in $(\mathbb{C}^*)^2$.

\medskip

A log resolution of $\langle f\rangle$ at the origin can be obtained
via toric modifications adapted to the Newton polygon of $f$.
In dimension two, the Newton diagram has a single compact edge
joining $(a,0)$ to $(0,b)$, whose primitive inward normal is
\[
w=\frac{1}{\gcd(a,b)}(b,a).
\]

This Newton–polyhedral structure governs the candidate exponents and
the asymptotic behaviour of oscillatory integrals, and is compatible
with the resolution-based description of local zeta functions
(see Varchenko~\cite[§0.4--§0.6]{Varchenko76} and
Igusa~\cite[Ch.~5, §5.4]{Igusa00}).

The associated toric valuation $\nu_E$ satisfies
\[
  \nu_E(x^iy^j) = bi+aj,
\]
and since both monomials $x^a$ and $y^b$ lie on the Newton polygon,
\[
\nu_E(f)
=
\min\bigl(\nu_E(x^a),\nu_E(y^b)\bigr)
=
\min(ab,ab)
=
ab.
\]

\medskip

We compute the asymptotic behaviour of $V(\varepsilon)$ directly.
Apply the change of variables
\[
x=\varepsilon^{1/a}u,
\qquad
y=\varepsilon^{1/b}v
\]
in $\mathbb{C}^2\cong\mathbb{R}^4$. Then
\[
f(x,y)=\varepsilon\bigl(u^a+v^b\bigr),
\qquad
dx\wedge d\bar x\wedge dy\wedge d\bar y
=
\varepsilon^{2/a+2/b}\,
du\wedge d\bar u\wedge dv\wedge d\bar v.
\]
Therefore
\[
V(\varepsilon)
=
\varepsilon^{2(1/a+1/b)}
\Vol\bigl\{(u,v)\in\mathbb{C}^2:\ |u^a+v^b|<1\bigr\},
\]
which implies
\[
V(\varepsilon)\asymp \varepsilon^{2(1/a+1/b)}.
\]

Comparing this asymptotic behaviour with the general form
\[
V(\varepsilon)\asymp \varepsilon^{2\lambda},
\]
we conclude that
\[
\rlct(f)=\frac{1}{a}+\frac{1}{b},
\]
provided $\frac{1}{a}+\frac{1}{b}\le 1$.

In the case $\frac{1}{a}+\frac{1}{b}>1$, the exponent is capped at $1$,
so $\rlct(f)=1$. For all $a,b\ge 2$, one has
$\frac{1}{a}+\frac{1}{b}\le 1$, with equality if and only if $a=b=2$.

\medskip

We confirm the result using $\rlct(I)=\min_i(k_i+1)/m_i$.
Apply the $(b,a)$-weighted blow-up of $\mathbb{C}^2$ (with weights
$b$ on the $x$-axis and $a$ on the $y$-axis). The resulting
exceptional divisor $E$ satisfies
\[
  m_E = \nu_E(f)
  = \min\bigl\{\nu_E(x^a),\,\nu_E(y^b)\bigr\}
  = ab,
  \qquad
  k_E = a+b-1.
\]
The equalities $\nu_E(x)=b$ and $\nu_E(y)=a$ follow from the weight vector,
hence $\nu_E(x^a)=ab$ and $\nu_E(y^b)=ab$. The discrepancy
$k_E=a+b-1$ is the standard formula for a $(b,a)$-weighted blow-up in
$\mathbb{C}^2$ (see, e.g., Igusa~\cite[Ch.~5, §5.4]{Igusa00}). Therefore
\[
  \rlct(f) = \frac{k_E+1}{m_E} = \frac{a+b}{ab}
  = \frac{1}{a}+\frac{1}{b},
\]
in agreement with the scaling computation above.

\medskip

The compact edge of $\Gamma(f)$ has primitive inward normal
$w=(b,a)/\gcd(a,b)$, and the weighted blow-up with this primitive weight
extracts a \emph{single} divisor $E$ of minimal spectral value $1/a+1/b$;
the subdivision rays needed to complete the toric resolution carry strictly
larger values. Hence the leading value $\lambda_1=1/a+1/b$ is attained, in
this resolution, by one exceptional divisor, and
\[
m_{\lambda_1}=1
\qquad\text{whenever no two divisors of value }\lambda_1\text{ meet.}
\]
(The leading multiplicity exceeds $1$ only when several components of value
$\lambda_1$ genuinely cross in a chart, as in Example~\ref{ex:log}; this is
not measured by $\gcd(a,b)$. For instance $x^3+y^3$ has $\gcd=3$ but
$m_{\lambda_1}=1$, since the three branches of $\{f=0\}$ meet $E$ at distinct
points, whereas the node $x^2+y^2$ has $m_{\lambda_1}=2$ through the
crossing of its two branches.)

\medskip

In the coprime case $\gcd(a,b)=1$ the weighted blow-up produces a single
exceptional divisor and no branch crossing of equal value, so
\[
\Lambda_\pi(f)=\Bigl\{\frac{1}{a}+\frac{1}{b}\Bigr\},
\qquad
m_{\lambda_1}=1.
\]
Accordingly, the volumetric asymptotics takes the form
\[
V(\varepsilon)
=
c_1\,\varepsilon^{2(1/a+1/b)}
+
o\bigl(\varepsilon^{2(1/a+1/b)}\bigr),
\]
with no logarithmic correction. The constant is
\[
c_1=
\Vol\bigl\{(u,v)\in\mathbb{C}^2:\ |u^a+v^b|<1\bigr\}>0.
\]
\end{example}

\begin{example}
\label{ex:newton}

Let
\[
f(x,y)=x^4+x^2y^3+y^7\in\mathcal{O}_2.
\]
The Newton polygon $\Gamma(f)$ has vertices
$(4,0)$, $(2,3)$ and $(0,7)$, with compact edges
$\sigma_1$ joining $(4,0)$ to $(2,3)$ and
$\sigma_2$ joining $(2,3)$ to $(0,7)$.

\begin{figure}[ht]
\centering
\begin{tikzpicture}[scale=0.72]
  \fill[blue!10] (4,0) -- (2,3) -- (0,7) -- (0,8) -- (5.2,8) -- (5.2,0) -- cycle;

  \foreach \x in {0,...,5}
    \foreach \y in {0,...,8}
      \fill[gray!30] (\x,\y) circle (1.5pt);

  \draw[->] (-0.3,0) -- (5.5,0) node[right] {$i$};
  \draw[->] (0,-0.3) -- (0,8.5) node[above] {$j$};

  \foreach \x in {1,...,5}
      \draw (\x,2pt) -- (\x,-2pt)
      node[below,font=\tiny] {\x};

  \foreach \y in {1,...,8}
      \draw (2pt,\y) -- (-2pt,\y)
      node[left,font=\tiny] {\y};

  \draw[very thick,blue] (4,0)--(2,3)--(0,7);

  \fill[red] (4,0) circle (3pt)
      node[below right,black] {$(4,0)$};

  \fill[red] (2,3) circle (3pt)
      node[above right,black] {$(2,3)$};

  \fill[red] (0,7) circle (3pt)
      node[left,black] {$(0,7)$};

  \node[blue] at (3.25,1.8) {$\sigma_1$};
  \node[blue] at (1.3,5.2) {$\sigma_2$};
\end{tikzpicture}
\end{figure}

The face polynomials are

\[
f_{\sigma_1}=x^4+x^2y^3,
\qquad
f_{\sigma_2}=x^2y^3+y^7.
\]

A direct computation shows that neither face polynomial has a critical
point in $(\mathbb C^*)^2$; hence $f$ is Newton
non-degenerate and Varchenko's formula applies
\cite[§0.4--§0.6]{Varchenko76}.

\medskip

The associated toric log resolution has two exceptional divisors
corresponding to the primitive inward normals

\[
w_1=(3,2),
\qquad
w_2=(2,1).
\]

For a toric valuation with weight $(p,q)$,

\[
k=p+q-1.
\]

For $E_1$ one obtains

\[
m_1
=
\min\{12,12,14\}
=
12,
\qquad
k_1=4,
\]

and therefore

\[
\lambda_1
=
\frac{k_1+1}{m_1}
=
\frac5{12}.
\]

Similarly,

\[
m_2
=
\min\{8,7,7\}
=
7,
\qquad
k_2=2,
\]

so

\[
\lambda_2
=
\frac{k_2+1}{m_2}
=
\frac37.
\]

Hence

\[
\Lambda_\pi(f)
=
\left\{
\frac5{12},
\frac37
\right\},
\qquad
\rlct(f)
=
\frac5{12},
\]

and both leading candidates are produced by a single divisor, so

\[
m_{5/12}
=
m_{3/7}
=
1.
\]

\medskip
\noindent
\emph{A further candidate and the distinction between candidate and
visible spectra.}

This resolution is not of the type considered in
Remark~\ref{rem:when-complete}. In the chart

\[
x=\alpha\beta^2,
\qquad
y=\beta,
\]

one computes

\[
f
=
\beta^7
\bigl[(\alpha^2+1)+\alpha^4\beta\bigr].
\]

Thus the local unit depends nontrivially on the exceptional coordinate,

\[
u(\alpha,\beta)
=
(\alpha^2+1)+\alpha^4\beta,
\]

since

\[
\nu_{E_2}(x^4)=8=m_2+1.
\]

Consequently,

Definition~\ref{def:spectrum-extended}
produces the additional candidate

\[
\frac{k_2+2}{m_2}
=
\frac47,
\]

so that

\[
\frac47
\in
\widetilde{\Lambda}_\pi(f).
\]

At the level of local charts this candidate produces nontrivial residue
contributions. Local residue computations suggest that, in this example,
the residue contributions arising from the two charts meeting at $4/7$
appear with opposite signs. Determining whether they cancel
globally requires the complete residue construction obtained after
gluing the local charts, which lies beyond the scope of the present
paper.

Accordingly, we neither assert that

\[
\frac47\in\Lambda(f)
\]

nor that it is absent from $\Lambda(f)$.

Rather, this example illustrates the conceptual distinction between the
extended candidate spectrum and the visible spectrum: the
former is determined locally by the resolution data, whereas the latter
consists precisely of those candidate values whose global residue does
not vanish.

\medskip

This issue does not affect the leading asymptotic behaviour.
Indeed,

\[
\frac47>\frac37>\frac5{12},
\]

so the first two terms of the expansion are completely determined by
$E_1$ and $E_2$.

Therefore

\[
V(\varepsilon)
=
c_1\varepsilon^{5/6}
+
c_2\varepsilon^{6/7}
+
o(\varepsilon^{6/7}),
\]

with

\[
c_1,c_2>0.
\]

Any contribution arising from the candidate value $4/7$, if present,
would be of order

\[
O(\varepsilon^{8/7})
=
o(\varepsilon^{6/7}),
\]

precisely because $4/7>3/7$, so it is absorbed into the remainder
regardless of how the cancellation question above is ultimately resolved.

\medskip

Applying
Theorem~\ref{thm:volume-reconstruction},

\[
\rlct(f)
=
\frac12
\lim_{\varepsilon\to0}
\frac{\log V(\varepsilon)}
{\log\varepsilon}
=
\frac5{12}.
\]

After subtracting the leading term,

\[
R_2(\varepsilon)
=
V(\varepsilon)
-
c_1\varepsilon^{5/6},
\]

one obtains

\[
\lambda_2
=
\frac12
\lim_{\varepsilon\to0}
\frac{\log|R_2(\varepsilon)|}
{\log\varepsilon}
=
\frac37.
\]

Since

\[
m_{5/12}
=
m_{3/7}
=
1,
\]

no logarithmic factors occur in the first two terms.

This example illustrates that the extended candidate spectrum may
contain values whose visibility depends on global residue
cancellations. It is precisely this phenomenon that motivates the
distinction between the extended candidate spectrum
$\widetilde{\Lambda}_\pi(f)$ and the visible spectrum
$\Lambda(f)$.
\end{example}

\begin{example}[A Fermat singularity: a simple leading pole]
\label{ex:fermat}

Consider
\[
f(x,y,z)=x^4+y^4+z^4\in\mathcal O_3.
\]
This example illustrates that a naive lattice-point count on the Newton
facet does \emph{not} determine the multiplicity of the leading pole.

The Newton polyhedron has a single compact facet with vertices
$(4,0,0)$, $(0,4,0)$ and $(0,0,4)$, whose primitive inward normal is
$(1,1,1)$. Since
\[
\nabla(x^4+y^4+z^4)=0
\]
has no solution in $(\mathbb C^*)^3$, the polynomial is Newton
non-degenerate.

The corresponding toric modification is the ordinary blow-up
\[
\pi:Y\longrightarrow\mathbb C^3.
\]
In the chart
\[
x=t,\qquad y=tv,\qquad z=tw,
\]
one has
\[
f\circ\pi
=
t^4(1+v^4+w^4).
\]
Hence the unique exceptional divisor satisfies
\[
m_E=4,\qquad
k_E=2,\qquad
\lambda_E=\frac{k_E+1}{m_E}=\frac34.
\]

The strict transform is smooth and meets the exceptional divisor
transversely. Thus $\pi$ is already a log resolution and

\[
\Lambda_\pi(f)
=
\Bigl\{\frac34,1\Bigr\},
\qquad
\rlct(f)=\frac34,
\qquad
m_{3/4}=1.
\]

Since only one divisor contributes to the value $3/4$, the pole is
simple and no logarithmic factor appears. Accordingly,

\[
V(\varepsilon)
\asymp
\varepsilon^{3/2}.
\]

Indeed, the scaling
\[
x=\varepsilon^{1/4}u,\qquad
y=\varepsilon^{1/4}v,\qquad
z=\varepsilon^{1/4}w
\]
gives
\[
f=\varepsilon(u^4+v^4+w^4),
\]
while
\[
dV_{\mathbb R^6}
=
\varepsilon^{3/2}\,dV_{uvw}.
\]

The lattice-point count
\[
\#\{(a,b,c)\in\mathbb Z_{>0}^3:a+b+c=4\}=3
\]
computes the geometric genus of the surface germ, not the multiplicity
of the pole. The order of the pole is determined instead by the maximal
number of divisors with the same spectral value meeting in one chart.
Here this number is one, so no logarithmic factor occurs.

Finally, Theorem~\ref{thm:volume-reconstruction} recovers
\[
\rlct(f)
=
\frac34,
\qquad
m_{3/4}=1,
\]
directly from the asymptotic behaviour of
$V(\varepsilon)$.

\end{example}

\begin{example}[Normal crossings: a genuine logarithmic factor]
\label{ex:log}

Consider
\[
f(x,y)=xy\in\mathcal O_2.
\]
Since the divisor $\{xy=0\}$ already has simple normal crossings, no
blow-up is required. Its two irreducible components satisfy

\[
m_1=m_2=1,
\qquad
k_1=k_2=0,
\qquad
\lambda_1=\lambda_2=1.
\]

Because the two divisors meet at the origin, the local zeta function has
a pole of order two at $s=1$. Consequently,

\[
V(\varepsilon)
=
\Vol\{|xy|<\varepsilon\}
\asymp
\varepsilon^2|\log\varepsilon|,
\qquad
\varepsilon\to0^+.
\]

Thus

\[
\Lambda(xy)=\{1\},
\qquad
m_1=2,
\]

and the asymptotic expansion contains the logarithmic factor

\[
|\log\varepsilon|^{m_1-1}
=
|\log\varepsilon|.
\]

This contrasts with Example~\ref{ex:fermat}. There the leading spectral
value is produced by a single divisor, whereas here two divisors with
the same spectral value intersect in one chart, giving rise to a genuine
higher-order pole and hence to the logarithmic term.

Applying
Theorem~\ref{thm:volume-reconstruction},
both the exponent $\lambda=1$ and its multiplicity $m_1=2$ are recovered
directly from the asymptotic expansion of the volume.

\end{example}

\section*{Acknowledgments}

We thank Guillem Blanco for his careful reading of an earlier version of
this paper and, in particular, for pointing out a gap in the original proof
of Proposition~\ref{prop:poles}. His observation led directly to the
corrected treatment of the candidate and visible spectra presented in
Section~\ref{sec:vol-spectrum}, including the distinction between candidate
and actual poles, which is now emphasized throughout the paper.


\begin{thebibliography}{99}



\bibitem{ArnoldGuseinVarchenko85}
Arnold, V.I., Gusein-Zade, S.M., Varchenko, A.N.: Singularities of Differentiable Maps, Vols. II. Birkhäuser, Boston (1985). \href{https://doi.org/10.1007/978-0-8176-8343-6}{https://doi.org/10.1007/978-0-8176-8343-6}

\bibitem{Atiyah70}
Atiyah, M.F.: Resolution of singularities and division of distributions. Comm. Pure Appl. Math. \textbf{23}, 145–150 (1970). \href{https://doi.org/10.1002/cpa.3160230202}{https://doi.org/10.1002/cpa.3160230202}

\bibitem{BM97}
Bierstone, E., Milman, P.D.: Canonical desingularization in characteristic zero by blowing up the maximum strata of a local invariant. Invent. Math. \textbf{128}, 207–302 (1997). \href{https://doi.org/10.1007/s002220050141}{https://doi.org/10.1007/s002220050141}

\bibitem{BiviAFukui18}
Bivà-Ausina, C., Fukui, T.: Mixed Łojasiewicz exponents, log canonical thresholds of ideals and bi-Lipschitz equivalence. J. Math. Soc. Japan \textbf{70}(3), 1045–1070 (2018). \href{https://doi.org/10.1016/j.jpaa.2015.06.007}{https://doi.org/10.1016/j.jpaa.2015.06.007}

\bibitem{BernigLytchak07}
Bernig, A., Lytchak, A.: Tangent spaces and Gromov–Hausdorff limits of subanalytic sets. J. Reine Angew. Math. \textbf{605}, 1–20 (2007). \href{https://doi.org/10.1515/CRELLE.2007.050}{https://doi.org/10.1515/CRELLE.2007.050}

\bibitem{Blum16}
Blum, H., Jonsson, M.: Thresholds, valuations, and K-stability. Adv. Math. \textbf{371}, paper No. 107611, 46 pp. (2020). \href{https://doi.org/10.1016/j.aim.2020.107062}{https://doi.org/10.1016/j.aim.2020.107062}

\bibitem{Collins18}
Collins, T.C.: Log-canonical thresholds in real and complex dimension two. Ann. Inst. Fourier (Grenoble) \textbf{68}(7), 2883–2900 (2018). \href{https://doi.org/10.5802/aif.3229}{https://doi.org/10.5802/aif.3229}

\bibitem{Demailly}
Demailly, J.-P.: Complex Analytic and Differential Geometry. Online book, available at the author’s webpage \href{http://www-fourier.ujf-grenoble.fr/~demailly/books.html}{http://www-fourier.ujf-grenoble.fr/~demailly/books.html} (2023). Accessed 20 March 2026

\bibitem{Denef91}
Denef, J.: Report on Igusa's local zeta function. Astérisque \textbf{201-202-203}, S\'eminaire Bourbaki exp.\ n$^{\text{o}}$ 741, 359--386 (1991). \href{http://www.numdam.org/item?id=SB_1990-1991__33__359_0}{http://www.numdam.org/item?id=SB\_1990-1991\_\_33\_\_359\_0}

\bibitem{DenefLoeser92}
Denef, J., Loeser, F.: Caractéristiques d'Euler–Poincaré, fonctions zêta locales et modifications analytiques. J. Amer. Math. Soc. \textbf{5}(4), 705–720 (1992). \href{https://doi.org/10.2307/2152708}{https://doi.org/10.2307/2152708}

\bibitem{DenefLoeser99}
Denef, J., Loeser, F.: Germs of arcs on singular algebraic varieties and motivic integration. Invent. Math. \textbf{135}(1), 201–232 (1999). \href{https://doi.org/10.1007/s002220050284}{https://doi.org/10.1007/s002220050284}

\bibitem{EinLazMust04}
Ein, L., Lazarsfeld, R., Mustaţă, M.: Contact loci in arc spaces. Compos. Math. \textbf{140}(5), 1229–1244 (2004). \href{https://doi.org/10.1112/S0010437X04000429}{https://doi.org/10.1112/S0010437X04000429}

\bibitem{DAH}
Grulha, N.: Divisorial Persistence and Asymptotic Homology of Analytic Pairs. Preprint.

\bibitem{Hu15}
Hu, Z.: Valuations and Log Canonical Thresholds. Pure Appl. Math. Q. \textbf{11}(1), 49–86 (2015). \href{https://doi.org/10.4310/PAMQ.2015.v11.n1.a3}{https://doi.org/10.4310/PAMQ.2015.v11.n1.a3}

\bibitem{Igusa00}
Igusa, J.: An Introduction to the Theory of Local Zeta Functions. AMS/IP Studies in Advanced Mathematics, vol. 14. American Mathematical Society, Providence, RI (2000). \href{https://doi.org/10.1090/amsip/014}{https://doi.org/10.1090/amsip/014}

\bibitem{Kollar13}
Kollár, J.: Singularities of the Minimal Model Program. Cambridge Tracts in Mathematics, vol. 200. Cambridge University Press, Cambridge (2013). \href{https://doi.org/10.1017/CBO9781139547895}{https://doi.org/10.1017/CBO9781139547895}

\bibitem{KollarMori98}
Kollár, J., Mori, S.: Birational Geometry of Algebraic Varieties. Cambridge Tracts in Mathematics, vol. 134. Cambridge University Press, Cambridge (1998). \href{https://doi.org/10.1017/CBO9780511662560}{
https://doi.org/10.1017/CBO9780511662560}

\bibitem{Korevaar04}
Korevaar, J.: Tauberian Theory: A Century of Developments. Grundlehren der mathematischen Wissenschaften, vol. 322. Springer, Berlin (2004). \href{https://doi.org/10.1007/978-3-662-10225-1}{https://doi.org/10.1007/978-3-662-10225-1}

\bibitem{Lazarsfeld04}
Lazarsfeld, R.: Positivity in Algebraic Geometry II. Ergebnisse der Mathematik und ihrer Grenzgebiete, 3. Folge, vols. 48/49. Springer-Verlag, Berlin (2004). \href{https://doi.org/10.1007/978-3-642-18810-7}{https://doi.org/10.1007/978-3-642-18810-7}

\bibitem{LJ-Teissier08}
Lejeune-Jalabert, M., Teissier, B.: Clôture intégrale des idéaux et équisingularité. Ann. Fac. Sci. Toulouse Math. (6) \textbf{17}(4), 781–859 (2008). \href{https://doi.org/10.5802/afst.1203}{https://doi.org/10.5802/afst.1203}

\bibitem{Mustata06}
Mustaţă, M.: IMPANGA lecture notes on log canonical thresholds, notes by T. Szemberg. In: Contributions to Algebraic Geometry, IMPANGA Lecture Notes (eds. Szemberg T., et al.), pp. 407–442. Eur. Math. Soc., Zürich (2012). \href{https://doi.org/10.4171/114-1/16}{https://doi.org/10.4171/114-1/16}

\bibitem{Mustata01}
Mustaţă, M.: Jet schemes of locally complete intersection canonical singularities. Invent. Math. \textbf{145}(2), 397–424 (2001). \href{https://doi.org/10.1007/s002220100152}{https://doi.org/10.1007/s002220100152}

\bibitem{Hir64}
Hironaka, H.: Resolution of singularities of an algebraic variety over a field of characteristic zero. I, II. Ann. Math. (2) \textbf{79}, 109–203; 205–326 (1964). \

\bibitem{Saito21}
Saito, M.: On real log canonical thresholds. Preprint, arXiv:2108.01923 (2021).

\bibitem{Valette07}
Valette, A.: The link of the germ of a semi-algebraic metric space. Proc. Amer. Math. Soc. \textbf{135}(10), 3083–3090 (2007). \href{https://doi.org/10.1090/S0002-9939-07-08878-8}{https://doi.org/10.1090/S0002-9939-07-08878-8}

\bibitem{Varchenko76}
Varchenko, A.N.: Asymptotic behavior of integrals and the Newton polyhedron. Invent. Math. \textbf{37}, 253–262 (1976). 

\bibitem{Watanabe09}
Watanabe, S.: Algebraic Geometry and Statistical Learning Theory. Cambridge Monographs on Applied and Computational Mathematics, vol. 25. Cambridge University Press, Cambridge (2009). \href{https://doi.org/10.1017/CBO9780511800474}{https://doi.org/10.1017/CBO9780511800474}

\bibitem{Watanabe2024}
Watanabe, S.: Recent Advances in Algebraic Geometry and Bayesian Statistics. Information Geometry \textbf{7}(Suppl.~1), S187--S209 (2024). \href{https://doi.org/10.1007/s41884-022-00083-9}{https://doi.org/10.1007/s41884-022-00083-9}

\bibitem{Wei23}
Wei, J., Murfet, D., Gong, J., Li, B., Gell-Redman, J., Quella, M.: Deep learning is singular, and that's good. IEEE Trans. Neural Networks Learn. Syst. \textbf{34}(12), 10473–10486 (2023). \href{https://doi.org/10.1109/TNNLS.2022.3167409}{https://doi.org/10.1109/TNNLS.2022.3167409}

\bibitem{deFEI08}
de Fernex, T., Ein, L., Ishii, S.: Divisorial valuations via arcs. Publ. Res. Inst. Math. Sci. \textbf{44}(2), 425–448 (2008). \href{https://doi.org/10.2977/prims/1210167333}{https://doi.org/10.2977/prims/1210167333}

\end{thebibliography}
\end{document}